\documentclass[12pt]{amsart}
\usepackage{amssymb,amsmath}
\usepackage[dvips]{graphicx}
\usepackage{pslatex}
\usepackage{amsmath,amscd}
\usepackage{amsthm}
\usepackage{overpic}
\usepackage{color}

 \setlength{\textwidth}{6in}
 \setlength{\textheight}{8.5in}
 \setlength{\topmargin}{0in}
 \setlength{\oddsidemargin}{.25in}
 \setlength{\evensidemargin}{.25in}

\theoremstyle{plain} \newtheorem{thm}{Theorem}[section]
\newtheorem{cor}[thm]{Corollary} \newtheorem{prop}[thm]{Proposition}
\newtheorem{lemma}[thm]{Lemma} 

\newtheorem{question}[thm]{Question} 

\newtheorem*{namedtheorem}{\theoremname}
\newcommand{\theoremname}{testing}

\theoremstyle{definition} \newtheorem{defn}[thm]{Definition}

\theoremstyle{remark} \newtheorem{remark}[thm]{Remark}

\newcommand{\ZZ}{{\mathbb Z}}
\newcommand{\Sth}{\mathbb{S}^3}

\newcommand{\interior}[1]{%
  {\kern0pt#1}^{\mathrm{o}}%
}
 
\begin{document}

\author{Christian Millichap}
\address{Department of Mathematics\\ 
Furman University\\ 
Greenville, SC 29613}
\email{christian.millichap@furman.edu}

\author{Rolland Trapp}
\address{Department of Mathematics\\ 
California State University\\ 
San Bernardino, CA 92407}
\email{rtrapp@csusb.edu}

\title{Symmetry groups of hyperbolic links and their complements}

\begin{abstract}
We explicitly construct a sequence of hyperbolic links $\{ L_{4n} \}$ where the number of symmetries of each $\Sth \setminus L_{4n}$ that  are not induced by symmetries of the pair $(\Sth, L_{4n})$ grows linearly with $n$. Specifically,   $[Sym(\mathbb{S}^{3} \setminus L_{4n}) : Sym(\mathbb{S}^{3}, L_{4n})] =8n \rightarrow \infty$ as $n \rightarrow \infty$.  For this construction, we start with a family of minimally twisted chain links, $\{ C_{4n} \}$, where $Sym(\Sth, C_{4n})$ and $Sym(\Sth \setminus C_{4n})$ coincide and grow linearly with $n$. We then perform a particular type of homeomorphism on $\Sth \setminus C_{4n}$ to produce another link complement $\Sth \setminus L_{4n}$ where we can uniformly bound $|Sym(\Sth, L_{4n})|$ using a combinatorial condition based on linking number. A more general result  highlighting how to control symmetry groups of hyperbolic links is provided, which has potential for further application.
\end{abstract}

\maketitle

\section{Introduction}
\label{sec:intro}

Symmetry groups associated with a link $L$ and its complements $\mathbb{S}^{3} \setminus L$ naturally arise as objects of interest in low-dimensional topology. Here, we will consider two particular symmetry groups: the group of homeomorphisms of the pair $(\Sth, L)$ up to isotopy, denoted $Sym(\Sth, L)$, and  the group of self-homeomorphisms  of $\Sth \setminus L$ up to isotopy, denoted  $Sym(\Sth \setminus L)$.  Furthermore, we will exclusively be interested in the symmetry groups of a hyperbolic link, that is, a link $L \subset \Sth$ whose complement admits a complete metric of constant negative curvature. In this case, Mostow--Prasad rigidity implies that every symmetry of $\Sth \setminus L$ induces a unique isometry on its corresponding hyperbolic structure, and so,  $Sym(\Sth \setminus L) = Isom(\Sth \setminus L)$. Moving forward, we will use $Sym(\Sth \setminus L)$ to represent both of these groups. Analyzing symmetry groups of links via isometries of their complements has proven to be a useful approach; see \cite[Theorem 1]{PaPo2009}, \cite[Theorem 6.1]{MeMiTr2020} for some examples. Since any self-homeomorphism of $(\Sth, L)$ induces a homeomorphism of $\Sth \setminus L$, we have that $Sym(\Sth, L) \subset Sym(\Sth \setminus L)$. Thus, it is natural to ask the following question raised by Paoluzzi--Porti in \cite{PaPo2009}:  What is the relationship between symmetries of a hyperbolic link and isometries of its complement?

Explicit relationships between symmetry groups of hyperbolic links and isometry groups of their respective complements have been established for certain special classes of hyperbolic links.  For any hyperbolic knot $K$, we always have $Sym(\Sth, K) = Sym(\Sth \setminus K)$ since knots are determined by their complements \cite{GL}. In addition, the symmetry group of a hyperbolic knot is always a finite cyclic or dihedral group; this follows from the solution of The Smith Conjecture \cite{Mo1984} and a proof can be found in \cite{Lu1992}. The behavior of symmetry groups of certain hyperbolic links with multiple components can also exhibit a basic structure. For instance, the work of Sakuma--Weeks \cite{SaWe} combined with Gu\'{e}ritaud \cite{Gu:thesis} classified the symmetry groups of $2$-bridge link complements (which include infinitely many links with two components), where only a few groups of order eight or less were possible. These groups coincide with the symmetry groups of their respective links; see Millichap--Worden  \cite[Proposition 4.2]{MiWo2016} for a short proof of the equivalence. In addition, Meyer--Millichap--Trapp \cite{MeMiTr2020} analyzed  symmetry groups of flat fully augmented pretzel links, which can have arbitrary many components. The symmetry groups of these links and their respective complements also coincide.

At the same time, symmetry groups of hyperbolic links with multiple components  can exhibit far more variation. In contrast to hyperbolic knots,  $Sym(\Sth, L)$ can be strictly contained in $Sym(\Sth \setminus L)$, when $L$ admits multiple components. In \cite{HeWe1992}, Henry--Weeks used SnapPea to compute the symmetry groups of hyperbolic links and their respective complements with nine or fewer crossings. Their work revealed a handful of hyperbolic links where these groups did not coincide, though for any such hyperbolic link $L$ with nine or fewer crossings, they found that $[Sym(\mathbb{S}^{3} \setminus L) : Sym(\mathbb{S}^{3}, L)] \leq 4$. While this was partially due to the low complexity of the examples involved, the authors do not know of any other work in the literature that explicitly examines the behavior of $[Sym(\mathbb{S}^{3} \setminus L) : Sym(\mathbb{S}^{3}, L)]$ for hyperbolic links where this index is not one.

The work of Paoluzzi--Porti further highlights this contrast. While symmetry groups of knots can only be cyclic or dihedral, \cite[Theorem 1]{PaPo2009} shows that for any finite group $G$, there exists a hyperbolic link $L \subset \Sth$ such that $G \cong Sym^{+}(\Sth \setminus L)$, with $G$ acting freely;  $Sym^{+}(\Sth \setminus L)$ denotes the subgroup of orientation-preserving symmetries of $Sym(\Sth \setminus L)$. In addition, this result implies that there exist sets of hyperbolic links $\{ K_j \}$ where $[Sym(\Sth \setminus K_j): Sym(\Sth, K_j)] \rightarrow \infty$ as $j \rightarrow \infty$. Here, we provide a brief sketch of this argument that was communicated to the authors in a referee report on a previous version of this paper. Geometrization implies that for a hyperbolic link $L$, $Sym^{+}(\Sth, L)$ is a finite subgroup of $SO(4) \cong \textbf{S}^{3} \times_{\ZZ/2\ZZ} \textbf{S}^{3}$, where $\textbf{S}^{3}$ denotes the multiplicative group of unit quaternions. Furthermore,   $SO(3) \cong \textbf{S}^{3}/G$, where $G$ is an order two subgroup of $\textbf{S}^{3}$, and so, finite subgroups of $SO(4)$ are (up to taking a quotient by at most an order two subgroup) products of two finite subgroups of $SO(3)$. Since finite subgroups of $SO(3)$ are cyclic, dihedral, or one of the three symmetry groups of the Platonic solids, we just need to choose  links $K_{j}$ (via  \cite[Theorem 1]{PaPo2009}) where any finite  subgroups of $SO(4)$ would have arbitrarily large index in $Sym^{+}(\Sth \setminus K_j)$ as $j \rightarrow \infty$. For instance, choosing links such that $Sym^{+}(\Sth \setminus K_j)  \cong (\ZZ/n\ZZ)^{j}$, for $j \geq 3$ and $n$ sufficiently large will result in $[Sym^{+}(\Sth \setminus K_j): Sym^{+}(\Sth, K_j)] \geq n^{j-2}.$ Similar conclusions can be made for the full symmetry groups. 

While the work of Paoluzzi-Porti implies that such sets of hyperbolic links exist, there work does not give an immediate construction.  In response, we provide an explicit construction showing this index can be arbitrarily large using simple topological and combinatorial tools that are independent of their work. Our work also generalizes to a method for constructing other classes of links where their symmetry groups are controlled by basic combinatorial data; see Theorem \ref{thm:symcontrol}. In addition, our work  leads to some interesting questions about symmetry groups of fully augmented links and their complements that could be attacked using some of the tools developed in this paper; see Section \ref{sec:concluding}.  

In the following theorem, $D_{4n}$ denotes the symmetry group of a regular $4n$-gon. 

\begin{thm}
\label{thm:MAIN}
For each $n \geq 6$, there exists a hyperbolic link $L_{4n}$ with $Sym(\Sth \setminus L_{4n}) \cong (D_{4n} \times \ZZ_2) \rtimes \ZZ_2$ and $Sym(\Sth, L_{4n}) \cong \ZZ_2 \times \ZZ_2$. In particular, $[Sym(\mathbb{S}^{3} \setminus L_{4n}) : Sym(\mathbb{S}^{3}, L_{4n})] = 8n \rightarrow \infty$ as $n \rightarrow \infty$.
\end{thm}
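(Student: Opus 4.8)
The construction is announced in the abstract: start with minimally twisted chain links $C_{4n}$, whose symmetry groups $Sym(\Sth, C_{4n})$ and $Sym(\Sth\setminus C_{4n})$ coincide and are known explicitly, then modify the complement by a homeomorphism (most likely a "Dehn surgery-type" or "twisting along a disk/annulus" operation — a re-gluing of a solid torus, or passing to a fully augmented link) to obtain $\Sth\setminus L_{4n}$. So my plan would proceed in three stages: (1) identify $Sym(\Sth\setminus C_{4n})$ exactly and show it is isomorphic to $(D_{4n}\times\ZZ_2)\rtimes\ZZ_2$; (2) argue that the modifying homeomorphism carries $\Sth\setminus C_{4n}$ to $\Sth\setminus L_{4n}$ without changing the isometry group, so $Sym(\Sth\setminus L_{4n})\cong Sym(\Sth\setminus C_{4n})$; (3) compute $Sym(\Sth, L_{4n})$ directly and show it is just $\ZZ_2\times\ZZ_2$, the index computation $|(D_{4n}\times\ZZ_2)\rtimes\ZZ_2| / |\ZZ_2\times\ZZ_2| = (8n\cdot 4)/4 = 8n$ then being immediate.

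For step (1), the minimally twisted chain link $C_{4n}$ has an evident dihedral symmetry $D_{4n}$ coming from the cyclic arrangement of its $4n$ components around an axis together with a flip, plus extra symmetries: one $\ZZ_2$ exchanging "inside" and "outside" of the chain (a reflection through the plane of the chain, or the involution swapping the two "halves"), and another $\ZZ_2$ coming from the minimal-twisting structure. I would pin these down by exhibiting explicit isometries of the complete hyperbolic structure — these chain link complements are well understood (they decompose into regular ideal octahedra / are closely related to the magic manifold for small $n$), so I can read off $Isom(\Sth\setminus C_{4n})$ from the combinatorics of the ideal polyhedral decomposition and verify there are no further symmetries by a counting/rigidity argument (e.g. the isometry group acts faithfully on the set of cusps and on the canonical cell decomposition). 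That $Sym(\Sth, C_{4n}) = Sym(\Sth\setminus C_{4n})$ for these chain links is presumably cited or proved separately; I would use it only as stated in the abstract.

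For step (2), the key point is that the homeomorphism $\Sth\setminus C_{4n}\to\Sth\setminus L_{4n}$ is a homeomorphism of 3-manifolds, so by Mostow–Prasad rigidity it induces an isomorphism of isometry groups $Sym(\Sth\setminus C_{4n})\cong Sym(\Sth\setminus L_{4n})$; I just need $\Sth\setminus L_{4n}$ to be hyperbolic, which I would get from the explicit construction (again via an ideal polyhedral decomposition or by appealing to the earlier general machinery, Theorem 1.3 / \ref{thm:symcontrol}). The genuinely delicate step is step (3): bounding $|Sym(\Sth, L_{4n})|$ from above. Here the plan is to use the linking-number combinatorics advertised in the abstract — a symmetry of the pair $(\Sth, L_{4n})$ must permute the components of $L_{4n}$ preserving all pairwise linking numbers (and self-framings / the linking matrix), and if $L_{4n}$ is designed so that its linking matrix has a very rigid automorphism group (only $\ZZ_2\times\ZZ_2$ worth of permutations of components preserve it), then $Sym(\Sth, L_{4n})$ injects into that tiny group. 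I would then exhibit the two obvious generating involutions of $Sym(\Sth, L_{4n})$ explicitly (e.g. a mirror reflection and a rotation visible in a diagram of $L_{4n}$) to show the bound is attained, giving $Sym(\Sth, L_{4n})\cong\ZZ_2\times\ZZ_2$.

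\textbf{Main obstacle.} The hard part is the upper bound on $Sym(\Sth, L_{4n})$: linking numbers alone constrain only the permutation action on components, so I must rule out symmetries fixing the isotopy class of $L_{4n}$ setwise while acting on $\Sth$ in an essential way, and also confirm that the many "extra" isometries of $\Sth\setminus L_{4n}$ genuinely fail to extend over $\Sth$ preserving $L_{4n}$ — i.e. that the homeomorphism in step (2) really does destroy most of the symmetry at the level of the pair. Making the linking-matrix argument airtight (identifying exactly which component-permutations are realized, and checking no two distinct pair-symmetries become isotopic only after removing $L_{4n}$) is where the combinatorial design of $L_{4n}$ has to be used carefully; everything else is bookkeeping with the group $(D_{4n}\times\ZZ_2)\rtimes\ZZ_2$ and a division of orders.
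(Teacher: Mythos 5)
Your plan follows the same overall route as the paper (modify $C_{4n}$ by a complement-preserving homeomorphism built from annular Dehn twists, then use linking data to kill the component-permutation part of $Sym(\Sth,L_{4n})$), but it contains a genuine gap at exactly the point you flag as the ``main obstacle,'' and that gap is not a formality. Your assertion that $Sym(\Sth,L_{4n})$ \emph{injects} into the group of linking-matrix-preserving permutations of components is false: the natural map $\phi\colon Sym(\Sth,L_{4n})\to Aut(\Gamma(L_{4n}))$ has kernel $Sym_f(\Sth,L_{4n})$, the subgroup of pair-symmetries fixing each component setwise, and a priori this kernel could be large. Linking numbers say nothing about it. In the paper the link is in fact designed so that $Aut(\Gamma(L_{4n}))$ is \emph{trivial} (three edges added to the $4n$-cycle at pairwise distinct spacings), so the entire group $Sym(\Sth,L_{4n})$ equals the kernel; the advertised $\ZZ_2\times\ZZ_2$ is not the permutation part at all but precisely this kernel. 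The step your plan is missing is the upper bound $Sym_f(\Sth,L_{4n})\le Sym_f(\Sth\setminus L_{4n})\cong Sym_f(\Sth\setminus C_{4n})=\langle\beta,r\rangle\cong\ZZ_2\times\ZZ_2$, which uses two facts special to $C_{4n}$: that every isometry of the complement is induced by a symmetry of the pair $(\Sth,C_{4n})$, and that the dihedral subgroup $\langle\alpha,\gamma\rangle$ acts faithfully on components, so the cusp-fixing subgroup of the full $32n$-element isometry group is only $\ZZ_2\times\ZZ_2$. Without this, you have no upper bound on $|Sym(\Sth,L_{4n})|$ and the theorem does not follow.

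Two smaller discrepancies are worth noting. First, the paper does not need the full linking matrix or framings, only the linking \emph{graph} (which pairs have nonzero linking number), and it must verify how each $ml$-swap changes that graph (each swap relabels two adjacent vertices and adds one chord at distance three); your plan does not say how you would compute the linking data of the modified link from the construction. Second, the realization of the lower bound $\ZZ_2\times\ZZ_2$ is as you describe (reflection in the projection plane and the $180^{\circ}$ rotation about the circular axis survive every $ml$-swap), and the identification $Sym(\Sth\setminus L_{4n})\cong Sym(\Sth\setminus C_{4n})$ via Mostow--Prasad is exactly the paper's step; those parts of your outline are sound.
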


A slightly more precise version of Theorem \ref{thm:MAIN} is stated in Corollary \ref{thm:Symmetrygroupdisparity}. For this construction, we first examine a family of highly symmetric hyperbolic links: minimally twisted chain links with $4n$  components, the set of which we denote by $\{ C_{4n} \}_{n=2}^{\infty}$; see Figure \ref{fig:SymDiag}. It was shown in \cite{MeMiTr2020} that the symmetry groups of these links and their complements coincide and $|Sym(\Sth, C_{4n})| = 32n$. In Section \ref{sec:SymMinTwisted}, we review some background on minimally twisted chain links and the symmetry groups of these links and their respective complements, while providing a more explicit description of these symmetry groups. In Section \ref{sec:CP}, we describe a specific type of  homeomorphism of $\Sth \setminus C_{4n}$, called an $ml$-swap, which can be performed along any pair of punctured annuli in $\Sth \setminus C_{4n}$ bounded by a Hopf sublink of $C_{4n}$. The image of $C_{4n}$ under an $ml$-swap is a link in $\Sth$, where a link diagram can be explicitly described from a corresponding link diagram of $C_{4n}$.  By performing multiple well-chosen $ml$-swaps at once on $\Sth \setminus C_{4n}$, we  can construct a variety of links that are not isotopic to $C_{4n}$ but whose complements are homeomorphic to $\Sth \setminus C_{4n}$. In Section \ref{sec:SymCP} we show how to choose a set of  $ml$-swaps on  $\Sth \setminus C_{4n}$ that produces a non-isotopic link $L_{4n}$, where $|Sym(\Sth, L_{4n})|$ is universally bounded. Our control over this symmetry group comes from a basic combinatorial structure associated to the linking number of pairs of components of $L_{4n}$ that can be easily examined under $ml$-swaps and must be preserved under any symmetry of $(\Sth, L_{4n})$. We then expand upon these techniques to construct other families of links built by performing a sequence of $ml$-swaps on $\Sth \setminus C_{4n}$ and where we control the behavior of their corresponding symmetry groups. This is highlighted in Theorem \ref{thm:symcontrol} and Corollary \ref{cor:symcontrol}.

We would like to thank the referee from an earlier version of this paper for providing several helpful comments and suggestions that improved this paper.


\section{Symmetries of Minimally Twisted Chain Links}
\label{sec:SymMinTwisted}

A \textbf{minimally twisted $2n$-chain link}, denoted $C_{2n}$, is a link that consists of $2n$ unknotted circles embedded in $\Sth$ as a closed chain, where consecutive components alternate between lying in the projection plane and lying perpendicular to the projection plane. See Figure \ref{fig:SymDiag} for a diagram of $C_{8}$, which we refer to as its symmetric diagram. For $n \geq 3$, such links are known to be hyperbolic. This was first shown in Chapter 6 of Thurston's notes \cite[Example 6.8.7]{thurston:notes}, where Thurston described a particular decomposition of the hyperbolic structures of these link complements and determined an explicit formula for their volumes. Kaiser--Purcell--Rollins further studied the volumes of these links and other types of chain links in \cite{KPR2012}.

	Here, we are interested in the symmetries of minimally twisted $2n$-chain links and their complements. Such links are a special class of Montesinos links, whose symmetries were determined by Boileau and Zimmermann \cite{BoZi}. At the same time, these hyperbolic links have also been described in the literature as flat fully augmented pretzel links since $C_{2n}$ can be constructed by fully augmenting a pretzel link with $n$ twist regions and an even number of twists in each twist region; see \cite{Pu4} for an introduction to fully augmented links. Meyer--Millichap--Trapp \cite{MeMiTr2020} used the geometric structures on fully augmented links to study topological, geometric, and arithmetic properties of $\Sth \setminus C_{2n}$.  In particular, the symmetry groups $Sym(\Sth, C_{2n})$ and $Sym(\Sth \setminus C_{2n})$ where examined in  their work, which we now summarize and expand upon. 

\begin{figure}[ht]
	\centering
	\begin{overpic}[scale=1.0]{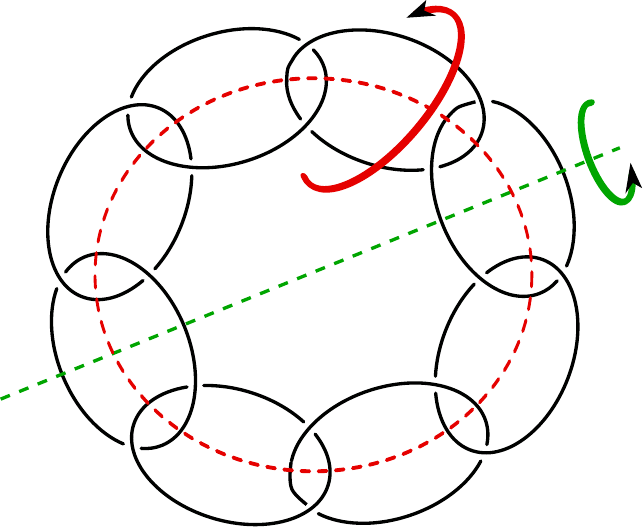}
		\put(73, 77){\LARGE{$\beta$}}
		\put(98,59){\LARGE{$\gamma$}}
	\end{overpic}
		\caption{A symmetric diagram for $C_{8}$.}
	\label{fig:SymDiag}
\end{figure}

The symmetries of $(\Sth, C_{2n})$ are all ``visually obvious'' as seen in the symmetric diagram given in Figure \ref{fig:SymDiag} for $C_{8}$.  For the remainder of this paper, assume the $2n$ components of $C_{2n}$ have been given a cyclic ordering by traversing this chain of link components in a clockwise direction in the symmetric diagram for $C_{2n}$ and assume these components have been labeled $K_{1}, \ldots, K_{2n}$.  Let $\beta$ be the $180^{\circ}$ rotation about the circular axis depicted in Figure \ref{fig:SymDiag}. Let $\alpha$ be the symmetry that maps $K_i$ to $K_{i+1}$ (mod $2n$) for $i=1, \ldots, 2n$, via a clockwise rotation about the center of the ring of components by $\frac{180}{n}^{\circ}$ followed by a  rotation of $90^{\circ}$ along the circular axis in the same direction as $\beta$, as depicted in Figure \ref{fig:SymDiag}. Let $\gamma$ be the $180^{\circ}$ rotation about the linear axis depicted in Figure \ref{fig:SymDiag}. As shown in  \cite[Theorem 6.1]{MeMiTr2020}, these three elements generate the group of orientation-preserving symmetries of this link. If we include $r$, the reflection across the projection plane, in our generating set, then we generate all of $Sym(\Sth, C_{2n})$, giving a group of order $16n$; see \cite[Corollary 6.3]{MeMiTr2020}.  Meyer--Millichap--Trapp used volume arguments to show that $Sym(\Sth, C_{2n}) = Sym(\Sth \setminus C_{2n})$. We highlight these results in the following theorem.

\begin{thm}[Theorem 6.1 and Corollary 6.3, \cite{MeMiTr2020}]
\label{thm:SymComp}
For $n \geq 4$, any minimally twisted $2n$-chain link has the following property:

\begin{center}
$Sym(\Sth \setminus C_{2n}) = Sym(\Sth, C_{2n})$,
\end{center}

where $Sym(\Sth,C_{2n})$ has order $16n$ and is generated by $\alpha$, $\beta$, $\gamma$, and $r$.
\end{thm}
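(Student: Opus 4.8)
The plan is a squeeze argument. Since any self-homeomorphism of the pair $(\Sth, C_{2n})$ restricts to one of the complement, we always have $H := \langle \alpha, \beta, \gamma, r\rangle \le Sym(\Sth, C_{2n}) \le Sym(\Sth \setminus C_{2n})$. So it suffices to prove (i) $|H| = 16n$ and (ii) $|Sym(\Sth \setminus C_{2n})| \le 16n$: granting both, $16n = |H| \le |Sym(\Sth, C_{2n})| \le |Sym(\Sth \setminus C_{2n})| \le 16n$, forcing all four quantities to equal $16n$ and all containments to be equalities, which is the theorem. (Recall that $C_{2n}$ is hyperbolic for $n \ge 3$ by \cite{thurston:notes}, so all of these symmetry groups are finite.)

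Step (i) is a finite-group computation. Each of $\alpha, \beta, \gamma, r$ is visibly realized by a self-homeomorphism of the pair in the symmetric diagram of Figure~\ref{fig:SymDiag}, so $H \le Sym(\Sth, C_{2n})$. One then records the defining relations among the generators — in particular $\alpha$ has order $2n$, $\beta, \gamma, r$ are involutions, and $\gamma \alpha \gamma^{-1} = \alpha^{-1}$ — and checks that they present a group of order $16n$ (namely $(D_{2n} \times \ZZ_2) \rtimes \ZZ_2$, with $D_{2n} = \langle \alpha, \gamma\rangle$ the symmetry group of the regular $2n$-gon); the hypothesis $n \ge 4$ keeps this count uniform and rules out the low-complexity exceptions. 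Alternatively, since $C_{2n}$ is a flat fully augmented pretzel link and hence a Montesinos link, the full equality $Sym(\Sth, C_{2n}) = H$ follows from the Boileau--Zimmermann classification of symmetry groups of Montesinos links \cite{BoZi}; but only the order count is needed here.

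Step (ii) is the crux. By Mostow--Prasad rigidity, $Sym(\Sth \setminus C_{2n}) = Isom(M)$ for the hyperbolic manifold $M := \Sth \setminus C_{2n}$, so we must bound an isometry group using intrinsic geometry. The key structure is a canonical system of totally geodesic surfaces: each component $K_i$ bounds a disk in $\Sth$ meeting $C_{2n}$ transversely in two points, one on $K_{i-1}$ and one on $K_{i+1}$, so its image in $M$ is an essential thrice-punctured sphere $S_i$, isotopic to a unique totally geodesic representative (and, one checks, these are all the essential totally geodesic thrice-punctured spheres of $M$). Hence every isometry of $M$ permutes $\{S_1, \dots, S_{2n}\}$. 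On the cusp of $K_{i+1}$ the boundary slope of $S_i$ is a meridian while that of $S_{i+1}$ is the disk longitude, and since these meet once on the cusp torus, $S_i$ and $S_{i+1}$ cannot be made disjoint; for $|i-j| \ge 2$ the surfaces $S_i$ and $S_j$ carry only meridional slopes on any shared cusp and so can be isotoped apart. Thus the intersection pattern of $\{S_i\}$ is a $2n$-cycle, and the permutation action factors through its automorphism group $D_{2n}$, of order $4n$. It remains to bound the kernel — isometries fixing every $S_i$ — by $4$: such an isometry acts on each totally geodesic $S_i$ compatibly with the way the $S_i$ are glued up cyclically along cusp tori, and this compatibility leaves only a Klein four-group's worth of possibilities (realized inside $H$ by $r$ and one further involution). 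Therefore $|Isom(M)| \le 4 \cdot 4n = 16n$, establishing (ii).

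The main obstacle is this last kernel estimate — showing that no hidden isometry of the complement preserves the canonical surface system, induces the trivial permutation of the components, and yet fails to be one of the evident diagrammatic symmetries. An alternative route to (ii) is a volume comparison, as carried out in \cite{MeMiTr2020}: one has $\mathrm{vol}(M) = 16n \cdot \mathrm{vol}(M/H)$, so it suffices to know that $M/H$ realizes the minimal volume in its commensurability class; here the order-$2n$ symmetry $\alpha$ helps, since $M/\langle \alpha\rangle$ is a hyperbolic orbifold of complexity bounded independently of $n$, reducing the minimality question to a single fixed orbifold. In either approach the argument ultimately rests on the $2n$-fold combinatorial symmetry of the chain together with Mostow rigidity.
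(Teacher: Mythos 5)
This statement is not proved in the paper at all: it is imported verbatim from \cite{MeMiTr2020} (Theorem 6.1 and Corollary 6.3), with the single supplementary remark that the arithmetic case $C_{8}$ (i.e.\ $n=4$), which those techniques do not cover, was verified separately in SnapPy. So there is no internal argument to match your proposal against. Your overall squeeze framing is sound, and step (i) is consistent with what the paper does prove later (Proposition \ref{prop:symgroupdecomp} carries out essentially this computation for $C_{4n}$), modulo the caveat the paper itself records that for odd $n$ one has $\alpha^{2n}=\beta$, so the clean product decomposition you assert does not hold uniformly, though the order $16n$ does.

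The genuine gaps are in step (ii). First, the two assertions doing all the work --- that the $S_i$ are the \emph{only} essential totally geodesic thrice-punctured spheres (so that every isometry permutes them), and that an isometry inducing the trivial permutation of the $S_i$ is one of only four possibilities --- are both left as ``one checks,'' and you yourself flag the kernel estimate as the main obstacle. That estimate is exactly where a hidden isometry of the complement could live: an isometry could preserve each $S_i$ setwise yet act on the cusp cross-sections by something not induced by a symmetry of the pair $(\Sth, C_{2n})$, and nothing in the sketch rules this out. Second, the statement includes $n=4$, and $C_{8}$ is arithmetic; both the volume-comparison route you offer as an alternative (minimality of a quotient orbifold in its commensurability class requires the commensurator to be discrete, which fails for arithmetic manifolds) and the actual argument of \cite{MeMiTr2020} break down there. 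The paper closes that case only by a machine computation, and your proposal does not address it. As written, then, the proof is an outline whose two hardest steps are asserted rather than established.
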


Note that, while $C_{6}$ and $C_{8}$ are both hyperbolic links, they are also the only minimally twisted $2n$-chain links that are arithmetic. The techniques used in proving Theorem 6.1 in \cite{MeMiTr2020} were dependent on properties of non-arithmetic cusped hyperbolic $3$-manifolds. In particular, $C_{6}$ and $C_{8}$ (and their complements) might have additional symmetries beyond those given in Theorem \ref{thm:SymComp}. However, by checking  $C_{8}$ in SnapPy, we found that Theorem 6.1 and Corollary 6.3 of \cite{MeMiTr2020} still hold, and so, we included this case in  the statement of Theorem \ref{thm:SymComp}. 

At this point, we will restrict our analysis to minimally twisted $4n$-chain links. Our results could easily be generalized to minimally twisted $2n$-chain links, though the group decomposition of $Sym(\mathbb{S}^{3}, C_{2n})$ varies slightly, depending on whether $n$ is even or odd. When $n$ is even, we have that $\alpha^{2n}=1$, while when $n$ is odd, we have that $\alpha^{2n} = \beta$. Since the class $\{ C_{4n}\}$ suffices to prove our major results, we decided to exclude the latter case.

 Now, we use the action of $Sym(\Sth, C_{4n})$ on this link's symmetric diagram in Figure \ref{fig:SymDiag} to more explicitly describe the symmetry groups highlighted in Theorem \ref{thm:SymComp}.

\begin{prop}
\label{prop:symgroupdecomp}
For $n \geq 2$, we have that $Sym(\Sth, C_{4n}) = H_1 \rtimes H_2 \cong (D_{4n} \times \ZZ_2) \rtimes \ZZ_2$, where $H_1 = <\alpha, \beta, \gamma> \cong D_{4n} \times \ZZ_2$ and $H_2 = <r> \cong \ZZ_2$. 
\end{prop}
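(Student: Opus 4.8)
The plan is to extract the order and a generating set of $Sym(\Sth,C_{4n})$ from Theorem \ref{thm:SymComp}, identify $H_1=\langle\alpha,\beta,\gamma\rangle$ with the orientation-preserving subgroup via \cite[Theorem 6.1]{MeMiTr2020}, and then pin down the isomorphism types by studying how the symmetries permute the components $K_1,\dots,K_{4n}$. First, applying Theorem \ref{thm:SymComp} to $C_{4n}$ (which is $C_{2m}$ with $m=2n\geq 4$), the group $G:=Sym(\Sth,C_{4n})$ has order $32n$ and is generated by $\alpha,\beta,\gamma,r$. By \cite[Theorem 6.1]{MeMiTr2020} the elements $\alpha,\beta,\gamma$ generate $Sym^{+}(\Sth,C_{4n})$, which has index $2$ in $G$ since $r$ is orientation-reversing; hence $H_1=\langle\alpha,\beta,\gamma\rangle=Sym^{+}(\Sth,C_{4n})$ is normal in $G$ of order $16n$. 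Since $H_2=\langle r\rangle\cong\ZZ_2$ (as $r^2=1$, $r\neq 1$), $H_1\cap H_2=\{1\}$ by orientation, and $|H_1H_2|=|H_1||H_2|=32n=|G|$, we conclude $G=H_1\rtimes H_2$. This already gives the splitting $Sym(\Sth,C_{4n})=H_1\rtimes H_2$ with $H_2\cong\ZZ_2$, and reduces the problem to showing $H_1\cong D_{4n}\times\ZZ_2$.

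Next I would set up the permutation representation. Any symmetry of $(\Sth,C_{4n})$ permutes the components and preserves the pattern of nonzero linking numbers; since $\mathrm{lk}(K_i,K_j)=0$ unless $j=i\pm 1$ (mod $4n$), in which case $\mathrm{lk}(K_i,K_j)=\pm 1$, every symmetry preserves the cyclic adjacency of the components and so induces an automorphism of the $4n$-cycle. This yields a homomorphism $\phi\colon G\to D_{4n}$, where $D_{4n}$ is realized as the automorphism group of the $4n$-cycle. Reading the action off Figure \ref{fig:SymDiag}: $\phi(\alpha)$ is the one-step rotation $K_i\mapsto K_{i+1}$ and $\phi(\gamma)$ is a reflection, so $\phi|_{H_1}$ is onto $D_{4n}$; moreover $\beta$ fixes every $K_i$ setwise, so $\beta\in\ker\phi$. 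Because $|H_1|=16n=2|D_{4n}|$, the kernel of $\phi|_{H_1}$ has order $2$, hence equals $\langle\beta\rangle$; being a normal subgroup of order $2$, $\langle\beta\rangle$ is central in $H_1$.

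To finish, I would read off from the diagram that $\alpha$ has order $4n$ and that $\gamma$ is an involution inverting $\alpha$, i.e.\ $\gamma\alpha\gamma^{-1}=\alpha^{-1}$. Then $\langle\alpha,\gamma\rangle$ is a quotient of the dihedral group of order $8n$; since $\phi$ maps it onto $D_{4n}$, it follows that $\langle\alpha,\gamma\rangle\cong D_{4n}$ and that $\phi$ is injective on it, so $\beta\notin\langle\alpha,\gamma\rangle$ and $\langle\alpha,\gamma\rangle\cap\langle\beta\rangle=\{1\}$. Since $\beta$ is central, $H_1=\langle\alpha,\gamma\rangle\cdot\langle\beta\rangle\cong D_{4n}\times\ZZ_2$, and therefore $Sym(\Sth,C_{4n})=H_1\rtimes H_2\cong(D_{4n}\times\ZZ_2)\rtimes\ZZ_2$, as claimed.

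The main obstacle is the geometric input in the last step: verifying from the symmetric diagram that $\alpha$ has order exactly $4n$ and that $\gamma\alpha\gamma^{-1}=\alpha^{-1}$. The delicate point is not that $\gamma$ reverses the cyclic order of the components, which is visually clear, but that $\alpha^{4n}$ and $(\gamma\alpha)^2$ are exactly the identity rather than the central involution $\beta$; these distinctions are precisely what separate $D_{4n}\times\ZZ_2$ from the other central extensions of $D_{4n}$ by $\ZZ_2$ (for instance the dihedral group of order $16n$), and they must be checked by tracking the actual rotation axes of $\alpha$, $\beta$, $\gamma$ in Figure \ref{fig:SymDiag} rather than deduced formally. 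Once those relations are in hand, the remaining steps are routine group theory together with standard invariance of linking numbers under symmetries.
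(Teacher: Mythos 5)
Your proposal is correct, and it reaches the same internal decomposition $Sym(\Sth,C_{4n})=H_1\rtimes H_2$ by a somewhat different route than the paper. For the splitting, the paper computes $|H_1|=16n$ from the isomorphism $H_1\cong D_{4n}\times \ZZ_2$ and invokes index two for normality, whereas you identify $H_1$ with the orientation-preserving subgroup (the kernel of the orientation character), which gives normality and $H_1\cap\langle r\rangle=\{1\}$ for free. For the structure of $H_1$, the paper checks directly that $\beta$ commutes with $\alpha$ and $\gamma$ and asserts that $\langle\alpha,\gamma\rangle$ acts on the components exactly as $D_{4n}$ acts on a regular $4n$-gon; you instead introduce the component-permutation homomorphism $\phi$ onto $Aut$ of the $4n$-cycle (the same linking-graph map the paper only sets up later, in Section 4), deduce $\ker(\phi|_{H_1})=\langle\beta\rangle$ from the order count $16n=2\cdot 8n$, and get centrality of $\beta$ from ``normal of order two is central'' rather than by inspection. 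Both arguments ultimately rest on the same geometric inputs from the symmetric diagram, namely that $\alpha^{4n}=1$ (rather than $\beta$), that $\gamma$ inverts $\alpha$ with $(\gamma\alpha)^2=1$, and that the dihedral action of $\langle\alpha,\gamma\rangle$ is faithful; the paper records the first of these in the discussion preceding the proposition (this is exactly why it restricts to $C_{4n}$), and you are right to flag it as the one point that distinguishes $D_{4n}\times\ZZ_2$ from other central extensions and must be verified from the actual axes rather than formally. The only item in the paper's proof you omit is the observation that $r$ fails to commute with $\alpha$, which shows the outer semidirect product is not a direct product; this is not needed for the statement as written, but it is the reason the paper writes $\rtimes$ rather than $\times$.
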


\begin{proof}
First, we can see that the action of the subgroup $<\alpha, \gamma>$ on $C_{4n}$ is equivalent to the action of $D_{4n}$ on a regular $4n$-gon $P_{4n}$, with components of $C_{4n}$ corresponding to vertices  of $P_{4n}$ and two linked components of $C_{4n}$ corresponding with an edge of $P_{4n}$. Thus $<\alpha, \gamma> \cong D_{4n}$. One can then check that $\beta \alpha = \alpha \beta$ and $\beta \gamma = \gamma \beta$, making $H_1 = <\alpha, \beta, \gamma> \cong D_{4n} \times \ZZ_2$. At the same time, $r$ does not commute with $\alpha$ since components embedded in the projection plane are fixed pointwise by $r$, while components perpendicular to the projection plane have their top halves reflected across the projection plane to their bottom halves via $r$. So, the action of $\alpha r$ on a component $K$ embedded in the projection plane is the same as the action of $\alpha$ on $K$, which is not the same as $r$ acting on the component $\alpha \cdot K$, which is perpendicular to the projection plane. This shows that $Sym(\Sth, C_{4n})$ is not a direct product of $H_1$ and $H_2$. Since $|H_1| = 16n$, we have that $[Sym(\Sth, C_{4n}): H_1] = 2$, and so, $H_1$ is a normal subgroup of $Sym(\Sth, C_{4n})$. Thus, since $Sym(\Sth, C_{4n}) = <\alpha, \beta, \gamma, r> = H_{1}H_{2}$, $H_{1} \cap H_{2} = \{ 1 \}$, and $H_{1} \trianglelefteq Sym(\Sth, C_{4n})$, we have that $Sym(\Sth, C_{4n}) = H_{1} \rtimes H_{2}$.
\end{proof}

The following corollary to  Proposition \ref{prop:symgroupdecomp} and Theorem \ref{thm:SymComp} will be useful in our analysis. First, we introduce some notation. Given a link $L \subset \Sth$, we let $Sym_{f}(\Sth, L)$ denote the subgroup of $Sym(\Sth, L)$ that maps each component of $L$ to itself setwise. Similarly, we define $Sym_{f}(\Sth \setminus L)$ to be the subgroup of $Sym(\Sth \setminus  L)$ that maps every cusp of $\Sth \setminus L$ to itself setwise. 

\begin{cor}
\label{cor:fixedsyms}
For $n \geq 2$, we have that $Sym_{f}(\Sth \setminus C_{4n}) = Sym_{f}(\Sth,C_{4n}) = <\beta, r> \cong \ZZ_{2} \times \ZZ_{2}$.
\end{cor}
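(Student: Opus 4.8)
The plan is to identify, among the generators $\alpha, \beta, \gamma, r$ of $Sym(\Sth, C_{4n})$ supplied by Theorem \ref{thm:SymComp} and Proposition \ref{prop:symgroupdecomp}, exactly which elements preserve every component of $C_{4n}$ setwise. First I would record the action of each generator on the cyclically ordered components $K_1, \ldots, K_{4n}$: by definition $\alpha$ sends $K_i$ to $K_{i+1}$ (mod $4n$), so $\alpha$ and all its nontrivial powers move components; $\gamma$ is a $180^{\circ}$ rotation about the linear axis, which reverses the cyclic order and hence also permutes components nontrivially (it fixes at most two components, not all $4n$); whereas $\beta$ (rotation about the circular axis) and $r$ (reflection across the projection plane) each fix every component setwise — $\beta$ because the circular axis meets each component and the $180^{\circ}$ rotation carries each $K_i$ to itself, and $r$ because planar components are fixed pointwise and perpendicular components are taken to themselves with top and bottom halves swapped.

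Next I would argue that $Sym_f(\Sth, C_{4n}) = \langle \beta, r\rangle$. The containment $\langle \beta, r\rangle \subseteq Sym_f(\Sth, C_{4n})$ is immediate from the previous paragraph. For the reverse inclusion, use the semidirect product decomposition $Sym(\Sth, C_{4n}) = \langle \alpha,\beta,\gamma\rangle \rtimes \langle r\rangle$ from Proposition \ref{prop:symgroupdecomp}, together with the isomorphism $\langle \alpha,\gamma\rangle \cong D_{4n}$ acting as the dihedral group on the $4n$-gon $P_{4n}$ whose vertices correspond to the components of $C_{4n}$. Writing a general element as $w\cdot r^{\epsilon}$ with $w \in \langle \alpha,\beta,\gamma\rangle$ and $\epsilon \in \{0,1\}$, and using $\beta\alpha=\alpha\beta$, $\beta\gamma=\gamma\beta$ to push $\beta$'s aside, one can write $w = \beta^{\delta} v$ with $v \in \langle\alpha,\gamma\rangle \cong D_{4n}$; since $\beta$ and $r$ act trivially on the set of components, $w r^{\epsilon}$ preserves all components iff $v$ does, i.e. iff $v$ is the identity permutation of the vertices of $P_{4n}$. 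As the $D_{4n}$-action on the vertices of a regular $4n$-gon is faithful, this forces $v = 1$, so the element lies in $\langle \beta, r\rangle$. Finally, $\beta$ and $r$ commute (both are central-type symmetries, or check directly on the diagram) and each has order $2$ with $\beta \neq r$, so $\langle\beta,r\rangle \cong \ZZ_2 \times \ZZ_2$.

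It remains to upgrade $Sym_f(\Sth, C_{4n})$ to $Sym_f(\Sth \setminus C_{4n})$. This is where I would invoke Theorem \ref{thm:SymComp}: since $Sym(\Sth \setminus C_{4n}) = Sym(\Sth, C_{4n})$ as groups acting on the manifold, and a symmetry fixes a cusp setwise precisely when the corresponding self-homeomorphism of the pair $(\Sth, C_{4n})$ fixes the associated component setwise, the "component-fixing" subgroup of one equals that of the other. Hence $Sym_f(\Sth \setminus C_{4n}) = Sym_f(\Sth, C_{4n}) = \langle\beta, r\rangle \cong \ZZ_2 \times \ZZ_2$, completing the proof.

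The main obstacle I anticipate is not any deep geometry — Theorem \ref{thm:SymComp} does all the heavy lifting — but rather being careful and explicit about the permutation action of $\gamma$ (and of arbitrary words in $\alpha, \gamma$) on the $4n$ components, i.e. genuinely using the faithfulness of the $D_{4n}$-action on the $4n$-gon rather than hand-waving that "only $\beta$ and $r$ look like they fix everything." Making that faithfulness argument airtight is the crux.
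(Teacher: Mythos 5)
Your proposal is correct and follows essentially the same route as the paper: both verify that $\beta$ and $r$ fix each component, equate $Sym_f(\Sth\setminus C_{4n})$ with $Sym_f(\Sth,C_{4n})$ via Theorem \ref{thm:SymComp}, and rule out any further component-fixing elements by the faithfulness of the dihedral action of $\langle\alpha,\gamma\rangle$ on the components. Your normal-form computation $w r^{\epsilon}=\beta^{\delta}v r^{\epsilon}$ merely spells out in more detail the step the paper summarizes in one sentence.
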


\begin{proof}
Since every symmetry of $\Sth \setminus C_{4n}$ is induced by a symmetry of $(\Sth, C_{4n})$, we have that  $Sym_{f}(\Sth \setminus C_{4n}) = Sym_{f}(\Sth, C_{4n})$.  We can immediately see that  $<\beta, r>$  is a subgroup of $Sym_{f}(\Sth, C_{4n})$ which is  isomorphic to $\ZZ_{2} \times \ZZ_{2}$. Since the dihedral action of $< \alpha, \gamma>$ on $C_{4n}$ is faithful and $<\alpha, \gamma, \beta, r> = Sym(\Sth, C_{4n})$, we can conclude that there are no other elements in $Sym_{f}(\Sth, C_{4n})$, and so, $Sym_{f}(\Sth, C_{4n}) = <\beta, r>$.
\end{proof}


\section{Non-overlapping Nested Partners}
\label{sec:CP}

Let $L$ be a link in $\Sth$. We say that a link $L' \subset \Sth$ is a \textbf{complement partner} for $L$ if and only if $\Sth \setminus L \cong \Sth \setminus L'$. We let $CP(L)$ denote the set of all complement partners for $L$, up to isotopy. Theorem \ref{thm:SymComp} shows that $|
Sym(\Sth, C_{2n})| =  \max \{|Sym(\Sth, L')| \hspace{0.05in} : \hspace{0.05in} L' \in CP(C_{2n}) \}$. Here, we will show how to explicitly construct  an infinite class of links $\{L_{4n}\}$ with $L_{4n} \in CP(C_{4n})$, and then in Section \ref{sec:SymCP}, we will show that $Sym(\Sth, L_{4n})$ behaves quite differently than $Sym(\Sth, C_{4n})$.  Since the results in this section don't depend on the group decomposition given in Proposition \ref{prop:symgroupdecomp}, we work with minimally twisted $2n$-chain links here.

For our construction, we will use a particular type of homeomorphism of $\Sth \setminus C_{2n}$ that is a composition of three specific Dehn twists along annuli, which we call an $ml$-swap,  and  analyze it's induced action on $C_{2n}$.  The term $ml$-swap refers to the fact that the meridians and longitudes of a Hopf sublink are swapped under this homeomorphism. This was first introduced by  Zevenbergen in \cite{Ze2021} and further applications of $ml$-swaps can be found in \cite[Section 5]{MiTr2023}.

Recall that we have labeled the components of $C_{2n}$ as $K_1, \ldots, K_{2n}$ by following a cyclic ordering in our symmetric diagram for $C_{2n}$.  Each $K_{i}$ bounds a disk, $D_{i}$, in $\Sth$. Each $D_{i}$ is punctured once by $K_{i-1}$ and once by $K_{i+1}$ (mod $2n$).  Figure \ref{fig:DehnTwists} illustrates an $ml$-swap on the Hopf sublink $\mathcal{H}$ determined by a pair of consecutive link components in the chain $C_{2n}$, which we labeled $\{K_i,K_{i+1}\}$.   In this context we let $A_i$ be the annulus $D_i\setminus K_{i+1}$ and $A_{i+1} = D_{i+1}\setminus K_i$.

\begin{figure}[h]
\begin{center}
\includegraphics{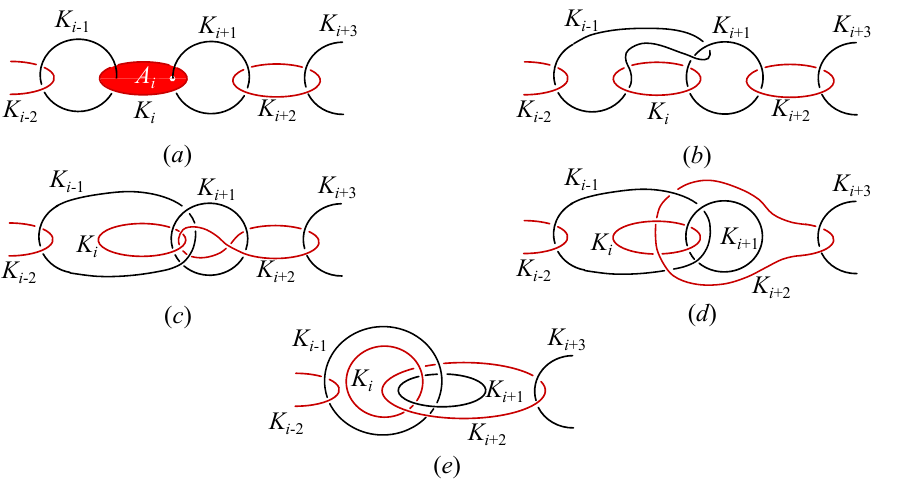}
\end{center}
\caption{An $ml$-swap as a product of annular Dehn twists}
\label{fig:DehnTwists}
\end{figure}

We now describe annular Dehn twists that realize Zevenbergen's $ml$-swap, and provide an explicit description of the link $C'_{2n}$ resulting from it.  For simplicity, assume the disks $D_i$ and $D_{i+1}$ are perpendicular to each other. Let $A_i$ and $A_{i+1}$ be the open annuli described above.   An $ml$-swap is the composition of annular Dehn twists along these annuli and, since $K_{i-1}$ and $K_{i+2}$ are the only components of $C_{2n}$ that intersect the twisting annuli, they are the only components to change their position relative to the others. First perform a right-handed Dehn twist along $A_i$, wrapping $K_{i-1}$ around $K_{i+1}$; see Figure \ref{fig:DehnTwists}$(b)$. Then perform a left-handed twist along $A_{i+1}$, which is punctured by both $K_{i-1}$ and $K_{i+2}$, and is a counterclockwise rotation about the $K_i$ puncture in Figure \ref{fig:DehnTwists}$(b)$. This left-handed Dehn twist unlinks $K_{i-1}$ from $K_i$ while linking $K_{i+2}$ with both $K_{i-1}$ and $K_i$; see Figure \ref{fig:DehnTwists}$(c)$. Figure \ref{fig:DehnTwists}$(d)$ illustrates that another right-handed Dehn twist along $A_i$ unlinks  $K_{i+1}, K_{i+2}$.  The resulting link can be isotoped until $K_i,K_{i-1},K_{i+3}$ are perpendicular to the projection plane while $K_{i-2},K_{i+1}, K_{i+2}$ lie on the projection plane (see Figure \ref{fig:DehnTwists}$(e)$). Since Dehn twists are homeomorphisms on the complement, the resulting link $C'_{2n}$ is a complement partner of $C_{2n}$.


\begin{defn}\label{defn:mlSwap}
The \textbf{$i^{th}$ $ml$-swap} of $\Sth \setminus C_{2n}$ is a homeomorphism between link complements  $s_i: \Sth \setminus C_{2n} \rightarrow \Sth \setminus C_{2n}'$ that is a composition of a right-hand Dehn twist along $A_{i}$, followed by a left-hand Dehn twist along $A_{i+1}$, followed by a right-hand Dehn twist along $A_{i}$, as described in Figure \ref{fig:DehnTwists}. 
\end{defn}

Since an $ml$-swap $s_i$ is a composition of three Dehn twists along the annuli $A_i, A_{i+1}$, this action only affects a sufficiently small neighborhood of $K_{i-1} \cup K_{i} \cup K_{i+1} \cup K_{i+2}$ (mod $2n$) in $\mathbb{S}^{3}$.  Thus, we say that two $ml$-swaps $s_i$ and $s_j$ of $\Sth \setminus C_{2n}$ are \textbf{non-overlapping} if and only if $|i-j| \geq 4$ (mod $2n$). Given a finite sequence of non-overlapping $ml$-swaps, their individual actions on $(\Sth, C_{2n})$ occur in pairwise disjoint neighborhoods, respectively. Thus, these $ml$-swaps commute with each other (as homeomorphisms of their complements) and we can easily describe the induced action of their composition on $(\Sth, C_{2n})$. This motivates the following definition.

\begin{defn}
We say that a link $L \subset \Sth$ is a \textbf{non-overlapping nested partner} of $C_{2n}$ if $L$ can be constructed by performing a finite sequence of non-overlapping $ml$-swaps on $C_{2n}$. We let $NNP(C_{2n})$  denote the set of such links, up to isotopy. 
\end{defn}

Describing an $ml$-swap in terms of annular Dehn twists is convenient for determining the specific link that results.  We now give a surgery description of $ml$-swaps which, in addition to illustrating more clearly that meridians and longitudes are swapped, will be useful when analyzing symmetry groups.

 To specify surgery instructions on a link $L=L_1\cup \cdots \cup L_n$ in $\Sth$, first choose disjoint closed tubular neighborhoods $N_i$ of $L_i$. Now specify a \textbf{slope} $C_i$ (an isotopy class of an unoriented simple closed curve) on each boundary torus $T_i=\partial N_i$. To perform Dehn surgery of $\Sth$ along $L$, with the given surgery instructions: first remove the interiors $\interior{N}_i$ of each $N_i$, resulting in the manifold $X=\Sth \setminus\left(\interior{N}_1\cup\cdots \cup \interior{N}_n\right)$ with boundary tori $T_i$.  Now attach solid tori $V_i$ to $X$ using boundary homeomorphisms $h_i:\partial V_i \to T_i$ that map meridians of $\partial V_i$ to the specified curves $C_i$. Let $M'$ denote the resulting manifold and note that the core longitudes of the $V_i$ form a link $L'$ in $M'$. There is ambiguity in choosing where the $h_i$ send longitudes of $\partial V_i$. Different choices for $h_i$ result in manifolds homeomorphic to $M'$, but the resulting links $L'$ can be distinct.  

With the above notation, note that any self-homeomorphism $h:X\to X$ determines an alternative surgery description of $M'$ simply by replacing each $C_i$ with $h(C_i)$.  This observation will be useful in providing a surgery description of Definition \ref{defn:mlSwap}, as well as in the proof of Theorem \ref{thm:symcontrol}.

If $L\subset \Sth$, preferred meridians $\mu_i$ and longitudes $\lambda_i$ of $L_i$ are used to parameterize slopes on $T_i$. More precisely, if $C_i = \pm(p\mu_i+q\lambda_i)$ we perform $p/q$-surgery on $L_i$.  For example, a $0$-surgery corresponds to choosing $C_i$ to be a longitude $\lambda_i$ while $\infty$-surgery is when $C_i=\mu_i$.  Thus surgery instructions on $L\subset \Sth$ is a labeling of each component of a link $L\subset \Sth$ with a rational number (or infinity).  

In order to specify the link $L'\subset M'$ resulting from a Dehn surgery on $L\subset \Sth$,  we make conventions on the gluing homeomorphisms $h_i$. If $L_i$ is labeled $0$, then the gluing homeomorphism $h_i:\partial V_i\to T_i$ maps a meridian of  $\partial V_i$ to a longitude of $T_i$.  For our purposes we choose $h_i$ to also map a longitude $\partial V_i$ to a meridian of $T_i$, so choose $h_i$ to swap meridians and longitudes.  Similarly, if $L_i$ is labeled $\infty$, choose $h_i$ to preserve meridians and longitudes.  

The next lemma gives a surgery description of $ml$-swaps and their resulting links, and we introduce some terminology.  Given a sublink $L^{\ast}\subset L$, labeling each component of a $L^{\ast}$ with $0$ and each component of $L\setminus L^{\ast}$ with $\infty$ will be called a $0$-surgery on $L^{\ast}$.  Note that surgery along each component with coefficient $\infty$ is trivial. 

\begin{lemma}\label{lem:MLSwap}
Let $L'\subset M'$ be the link resulting from $0$-surgery on $(K_i\cup K_{i+1})\subset C_{2n}$ in $\Sth$. Then $M' = \Sth$ and $L'=C_{2n}'$, where $C_{2n}'\subset \Sth$ is the link resulting from $ml$-swap $s_i$.  Moreoever, every $L'\in NNP(C_{2n})$ is the result of doing $0$-surgery on the sublink in $C_{2n}$ involved in the $ml$-swaps creating $L'$.
\end{lemma}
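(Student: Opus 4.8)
The plan is to show that an $ml$-swap, which Definition \ref{defn:mlSwap} presents as a composition of three annular Dehn twists, is the same homeomorphism on $\Sth \setminus C_{2n}$ as the one produced by $0$-surgery on the Hopf sublink $K_i \cup K_{i+1}$, and that the resulting link in $M'$ is $C'_{2n}$. First I would set up the surgery side carefully: by the conventions just established, a $0$-surgery on $K_i \cup K_{i+1}$ means removing tubular neighborhoods $N_i, N_{i+1}$ and regluing solid tori $V_i, V_{i+1}$ via homeomorphisms $h_i, h_{i+1}$ that swap meridians and longitudes, while doing trivial ($\infty$) surgery on all other components. The core longitudes of $V_i, V_{i+1}$ then form the new copies of $K_i, K_{i+1}$ inside $M'$; call the whole image link $L'$. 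The claim $M' = \Sth$ is where the topology lives — this is the classical fact that $0$-surgery on an unknotted component yields $\Sth$ again, with the dual solid torus swapping the roles of meridian and longitude; applied to each of $K_i$ and $K_{i+1}$ (each unknotted in $\Sth$) this gives $M' \cong \Sth$. The subtlety is that $K_i$ and $K_{i+1}$ are Hopf-linked, so one must check the surgeries are compatible: after $0$-surgery on $K_i$ alone we still have $\Sth$, and $K_{i+1}$ (which pierced the disk bounded by $K_i$ once) becomes again an unknot in the new $\Sth$ on which we then do $0$-surgery.

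The second, and I expect the main, step is identifying $L'$ with $C'_{2n}$, i.e. matching the surgery description against the Dehn-twist description in Figure \ref{fig:DehnTwists}. My approach here is to use the "swap meridians and longitudes" slogan literally. Under $0$-surgery with the chosen gluings, the effect on a component $K_{i-1}$ that linked $K_i$ once (geometrically, intersected the meridian disk of $K_i$) is that its old linking with $K_i$ gets recorded as a new meridional wrapping, and conversely the framing data gets transferred; the other Hopf partner $K_{i+2}$ of $K_{i+1}$ is treated symmetrically. I would verify this by tracking exactly the four components $K_{i-1}, K_i, K_{i+1}, K_{i+2}$ that sit near the twisting region: compute what curve on $T_{i-1}, T_i, T_{i+1}, T_{i+2}$ each ends up being after the regluing and check it matches the isotoped diagram in Figure \ref{fig:DehnTwists}$(e)$, namely that $K_i, K_{i-1}, K_{i+3}$ become perpendicular to the projection plane while $K_{i-2}, K_{i+1}, K_{i+2}$ lie in it. The cleanest way to see that the Dehn-twist composition and the surgery give the same homeomorphism is to observe that both restrict to the identity outside a neighborhood of $K_i \cup K_{i+1}$ (for the surgery this is immediate since we only reglue $N_i, N_{i+1}$; for the Dehn twists it follows because the annuli $A_i, A_{i+1}$ lie in such a neighborhood once we note the twisting of $K_{i-1}, K_{i+2}$ can be pushed into the neighborhood), and then compare their effect inside that neighborhood, which is a solid-torus-pair (Hopf link exterior) computation: a right-left-right sequence of annular twists along the meridian disks realizes precisely the meridian-longitude swap on each boundary torus.

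For the final sentence of the lemma — that every $L' \in NNP(C_{2n})$ is obtained by $0$-surgery on the corresponding sublink — I would argue as follows. By definition a non-overlapping nested partner is built by a finite sequence of non-overlapping $ml$-swaps $s_{i_1}, \dots, s_{i_k}$ with $|i_a - i_b| \geq 4 \pmod{2n}$. As noted in the text, non-overlapping $ml$-swaps act in pairwise disjoint neighborhoods, so their defining Dehn twists are supported in disjoint regions and the composition's effect is just the "union" of the individual effects. On the surgery side, performing $0$-surgery simultaneously on the sublink $\bigcup_a (K_{i_a} \cup K_{i_a + 1})$ (with $\infty$ on everything else) decomposes, because the components involved in distinct swaps are unlinked pairs living in disjoint regions, into the independent $0$-surgeries handled by the single-swap case. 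Applying the single-swap statement in each region and using that the surgery and Dehn-twist descriptions agree locally, we conclude the composite surgery yields $\Sth$ with the link $L'$ built by performing all the $ml$-swaps, which is exactly the arbitrary element of $NNP(C_{2n})$. The main obstacle throughout is bookkeeping: keeping the meridian-longitude conventions consistent across the Hopf-linked pair and verifying that the local surgery picture reproduces Figure \ref{fig:DehnTwists} rather than its mirror or some other variant; once the conventions are pinned down the rest is a standard Rolfsen-twist / Kirby-calculus style verification.
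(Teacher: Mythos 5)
Your main step---showing that the right--left--right composition of annular Dehn twists restricts to the exterior of the Hopf sublink $K_i\cup K_{i+1}$ and swaps meridional and longitudinal slopes on each of its boundary tori, so that the $0$-surgery instructions are carried to trivial ($\infty$) instructions---is exactly the computation the paper uses, and your treatment of the $NNP(C_{2n})$ case by disjointness of the supporting annuli is also the paper's argument. But your separate justification that $M'=\Sth$ contains a genuine error. You invoke ``the classical fact that $0$-surgery on an unknotted component yields $\Sth$ again'' and assert that ``after $0$-surgery on $K_i$ alone we still have $\Sth$.'' This is false: $0$-surgery on an unknot in $\Sth$ yields $S^1\times S^2$, since the exterior is a solid torus whose meridian disk is bounded by the preferred longitude, so the $0$-framed filling glues meridian to meridian. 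The same reasoning applied to a two-component unlink would ``prove'' that $0$-surgery gives $\Sth$, whereas it gives $(S^1\times S^2)\,\#\,(S^1\times S^2)$; the Hopf linking is essential, and your sequential argument never actually uses it. The true statement that $(0,0)$-surgery on a Hopf link yields $\Sth$ needs a real argument---for instance, the Hopf link exterior is $T^2\times[0,1]$ and the preferred longitude of each component is isotopic there to a meridian of the other (via the annuli $A_i$, $A_{i+1}$ themselves), so the two $0$-framed fillings assemble into the standard genus-one Heegaard splitting of $\Sth$; or, as in the paper and in your own second step, the annular-twist homeomorphism of the exterior converts both surgery slopes to meridians, so the filled manifold is the trivially filled one, namely $\Sth$. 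Since your meridian--longitude computation already delivers $M'=\Sth$ as a byproduct, the fix is simply to drop the sequential argument and let that computation carry both conclusions at once.

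A smaller imprecision: the Dehn twists are not supported in a neighborhood of $K_i\cup K_{i+1}$ alone. They are supported in a neighborhood of the annuli $A_i\cup A_{i+1}$, which reaches out far enough to move $K_{i-1}$ and $K_{i+2}$; indeed the paper describes the support as a neighborhood of $K_{i-1}\cup K_i\cup K_{i+1}\cup K_{i+2}$. The clean way to compare the two descriptions, which is what the paper does, is to work with the restriction $s_i|_X$ to the exterior $X$ of $K_i\cup K_{i+1}$ only: this is a self-homeomorphism of $X$ fixing each boundary torus setwise, it carries the remaining components of $C_{2n}$ to those of $C_{2n}'$, and it converts the $0$-surgery description of $M'$ into the trivial surgery description of $\Sth$, identifying $L'$ with $C_{2n}'$ in one stroke.
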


\begin{proof}
Let $M'$ be the manifold resulting from a $0$-surgery on the Hopf sublink $(K_i\cup K_{i+1})\subset C_{2n}$, and let $L'\subset M'$ be the link whose components are cores of the surgery tori.  Now let $N_1\cup \cdots\cup N_{2n}$ be a choice of disjoint closed tubular neighborhoods of the components of $C_{2n}$ in $\Sth$, chosen so that $N_i$ and $N_{i+1}$ are invariant under the Dehn twists along the annuli $A_i,A_{i+1}$ used in defining the $ml$-swap. Then the $ml$-swap $s_i$ restricts to a homeomorphism of $X=\Sth\setminus \interior{N}_i\cup \interior{N}_{i+1}$.  More precisely, $s_i|_X:X\to X$ is the homeomorphism resulting from the appropriate product of Dehn twists along the properly embedded annuli $A_i\cap X$ and $A_{i+1}\cup X$.

As noted above, the homeomorphism $s_i|_X$ determines an alternative surgery description for $M'$ with surgery slopes $s_i|_X(C_j)$ along boundary components $s_i|_X(T_j)$, for $j=i,i+1$.  Notice $s_i|_X$ fixes each boundary torus $T_i,T_{i+1}$ setwise, and thus it remains to compute images of the surgery slopes under $s_i|_X$.  

We focus  our attention to the action of $s_{i}|_{T_i}$ on slopes of $T_i$, as the argument for $T_{i+1}$ is similar. Note that Dehn twists along annuli $A_i,A_{i+1}$ restrict on $T_i$ to Dehn twists along the curves $A_i\cap T_i$ and $A_{i+1}\cap T_i$. By definition of the annuli $A_i,A_{i+1}$, they intersect the boundary torus $T_i$ in a preferred longitude and meridian of $K_i$, respectively.  Since the surgery cofficient of $K_i$ is zero, the surgery slope $C_i$ is a longitude of $K_i$, or $C_i = A_i\cap T_i$.  A quick calculation confirms that the defining Dehn twists for $s_i|_{T_i}$ swaps meridional and longitudinal slopes and, in particular, $s_i|_{T_i}(C_i)$ is a meridian of $K_i$.  Similarly, $s_i|_{T_i}(C_{i+1})$ is a meridian of $K_{i+1}$.

Since $s_i|_X$ maps both surgery curves to meridians, the new surgery coefficients for $K_i, K_{i+1}$ are both infinity. Hence $M'$ is homeomorphic to the manifold obtained by doing $\infty$ surgery on all components of $C_{2n}$ or, in other words, $M' = \Sth$.  The preceding discussion also shows that $L'=C_{2n}'$ as the homeomorphism used is the restriction of $s_i$ to $X$ and the new surgeries on $K_i,K_{i+1}$ are trivial.

It remains to extend this argument to non-overlapping nested partners.  Since the $ml$-swaps involved are non-overlapping, the annuli used to define different $ml$-swaps are disjoint.  One can then choose disjoint regular neighborhoods of each pair of intersecting annuli, and applying the above argument to each pair completes the proof.
\end{proof}

One nice feature about this surgery description is that it highlights the symmetry in an $ml$-swap that is somewhat hidden in the Dehn twist description.  Since the annuli aren't twisted the same number of times to perform an $ml$-swap, it may seem like $K_i$ plays a different role from $K_{i+1}$ in the process.  This, however, is misleading since the important property is to swap meridional and longitudinal slopes of both curves. The surgery description highlights this symmetry since both components of $\mathcal{H}_i = K_i\cup K_{i+1}$ have the same surgery coefficient.  A second appealing feature is that Lemma \ref{lem:MLSwap} can be used to relate symmetries of $C_{2n}$ and $L'$, as we now show.

In the following corollary we use the fact that $Sym(\Sth\setminus C_{2n}) \cong Sym(\Sth, C_{2n})$ to think of an element $\psi\in Sym(\Sth\setminus C_{2n})$ as permuting components of $C_{2n}$.

\begin{cor}\label{cor:MLSwapSymm}
Let $L'\in NNP(C_{2n})$, and let $\mathcal{H} =\{\mathcal{H}_{i_1},\dots,\mathcal{H}_{i_m}\}$ be the set of Hopf sublinks involved in the non-overlapping $ml$-swaps that create $L'$.  If $[\psi]\in Sym(\Sth\setminus C_{2n})$ preserves $\mathcal{H}$ set-wise, then $[\psi]$ extends to  an element of $Sym(\Sth,L')$.
\end{cor}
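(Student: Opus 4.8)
The plan is to use the surgery description from Lemma \ref{lem:MLSwap}: the link $L'$ is obtained from $C_{2n}$ by doing $0$-surgery on the sublink $\mathcal{H}^\ast = \mathcal{H}_{i_1} \cup \cdots \cup \mathcal{H}_{i_m}$ and $\infty$-surgery on the remaining components. Equivalently, $\Sth\setminus L'$ is built from the ``surgery-truncated'' manifold $X = \Sth\setminus(\interior{N}_1\cup\cdots\cup\interior{N}_{2n})$ by regluing solid tori $V_j$ along the slopes $C_j$ determined by the surgery coefficients ($C_j$ a longitude of $K_j$ when $K_j \subset \mathcal{H}^\ast$, and $C_j$ a meridian otherwise). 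The core longitudes of these $V_j$ recover $L'\subset\Sth$. Since $Sym(\Sth\setminus C_{2n}) \cong Sym(\Sth, C_{2n})$, any $[\psi]$ has a representative $\psi$ that is a homeomorphism of the pair $(\Sth, C_{2n})$, hence restricts to a self-homeomorphism of $X$ permuting the boundary tori $T_j = \partial N_j$ by the same permutation $\sigma$ that $\psi$ induces on the components $K_j$.

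The key observation is that $\psi|_X$ is an isometry-type symmetry coming from an ambient symmetry of $(\Sth, C_{2n})$, so it carries the preferred meridian-longitude framing of each $K_j$ to that of $K_{\sigma(j)}$: meridians go to meridians and longitudes to longitudes (possibly with a sign, which does not matter since slopes are unoriented). Therefore $\psi|_X$ maps the surgery slope $C_j$ to a slope of the \emph{same type} (meridian or longitude) on $T_{\sigma(j)}$. Now use the hypothesis that $[\psi]$ preserves $\mathcal{H}$ set-wise: this means $\sigma$ permutes the index set $\{i_1,\dots,i_m\}$ together with their ``partners'' $\{i_1+1,\dots,i_m+1\}$, i.e. $\sigma$ maps $\mathcal{H}^\ast$ to itself. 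Consequently $K_j\subset\mathcal{H}^\ast$ if and only if $K_{\sigma(j)}\subset\mathcal{H}^\ast$, so $C_j$ is a longitude exactly when $C_{\sigma(j)}$ is a longitude, and the same for meridians. Hence $\psi|_X$ carries the surgery slope $C_j$ to a slope isotopic to $C_{\sigma(j)}$ on $T_{\sigma(j)}$, for every $j$. By the remark preceding Lemma \ref{lem:MLSwap}, a self-homeomorphism of $X$ sending each surgery slope to the (relocated) surgery slope extends across the regluing: one can precompose the gluing homeomorphisms $h_j$ with $\psi|_X$ and an appropriate self-homeomorphism of each $V_j$ to obtain a self-homeomorphism $\widetilde\psi$ of the surgered manifold $M' = \Sth$. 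Since the $V_j$'s core longitudes form $L'$ and our conventions on $h_j$ (swap or preserve meridian/longitude) are respected, $\widetilde\psi$ carries $L'$ to $L'$, i.e. $[\widetilde\psi]\in Sym(\Sth, L')$.

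The main obstacle is the bookkeeping of framings under the regluing: one must check that $\psi|_X$ sending $C_j$ to $C_{\sigma(j)}$ really does extend to a homeomorphism of the closed-up pieces \emph{respecting the core longitudes that constitute $L'$}, rather than merely extending to a homeomorphism of the underlying manifold $M'=\Sth$. This is where the conventions fixed just before Lemma \ref{lem:MLSwap} (for a $0$-labeled component, $h_j$ swaps meridian and longitude; for an $\infty$-labeled one, $h_j$ preserves them) do the work: since $\psi|_X$ preserves the meridian/longitude decomposition of each $T_j$ up to sign and $\sigma$ preserves which components carry which label, the diagram $h_{\sigma(j)}^{-1}\circ(\psi|_X)\circ h_j$ is, on the boundary of each $V_j$, a slope-preserving self-homeomorphism of $\partial V_j$, which extends over the solid torus. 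Taking the union over all $j$ with $\psi|_X$ gives the desired $\widetilde\psi$, and tracking cores shows $\widetilde\psi(L') = L'$. The non-overlapping hypothesis enters only to guarantee (via Lemma \ref{lem:MLSwap}) that $L'$ genuinely has this clean simultaneous $0$-surgery description, so that the argument applies verbatim to all of $NNP(C_{2n})$.
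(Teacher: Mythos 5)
Your proposal is correct and follows essentially the same route as the paper's proof: both use the $0$-surgery description from Lemma \ref{lem:MLSwap}, observe that any symmetry of $(\Sth, C_{2n})$ preserves preferred meridional and longitudinal slopes, and conclude that preserving $\mathcal{H}$ set-wise forces $\psi|_X$ to preserve surgery slopes, hence to extend over the reglued solid tori carrying cores to cores. Your extra bookkeeping about the gluing maps $h_j$ and the core longitudes makes explicit a step the paper leaves as an observation, but the underlying argument is the same.
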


\begin{proof}
By Lemma \ref{lem:MLSwap}, components of $L'$ are core longitudes of the solid tori $V_i$ used to perform the $0$-surgery on the sublink $\mathcal{H}\subset C_{2n}$.  Using notation introduced prior to Lemma \ref{lem:MLSwap}, we have seen that $0$-surgery on $\mathcal{H}$ results in the filled manifold $M' = \Sth$, so $L'\subset \Sth$.  Recall that $X$ is the complement of open tubular neighborhoods of components of $C_{2n}$, and that each homeomorphism $h:X\to X$ induces new surgery instructions using surgery slopes $h(C_i)$ on $h(T_i)\subset \partial X$. Observe that if $h$ preserves surgery slopes, i.e., whenever $h(T_i) = T_j$ we have $h(C_i) = C_j$, then $h$ extends to a homeomorphism $h:M'\to M'$ of the manifold $M'$ resulting from the surgery.  The corollary follows from this observation.

Now suppose $[\psi]\in Sym(\Sth\setminus C_{2n})$ preserves $\mathcal{H}$ set-wise. The interior $\interior{X}$ is homeomorphic to $\Sth\setminus C_{2n}$, so a representative homeomorphism $\psi$ has a unique continuous extension to $X$, also denoted by $\psi:X\to X$.  The proof is complete once $\psi$ is shown to preserve surgery slopes.

Now observe that every generator, hence every element, of $Sym(\Sth,C_{2n})$ preserves slopes of preferred meridians and longitudes for each $K_i\in C_{2n}$ (recall that slopes are unoriented).  Since $Sym(\Sth\setminus C_{2n}) \cong Sym(\Sth, C_{2n})$, the same is true of $\psi$. Moreover, a $0$-surgery on $\mathcal{H}$ is determined by slopes that are either longitudinal  along $T_i$ (if $K_i\in \mathcal{H}$), or meridional otherwise.  Now $\psi: X\to X$ preserves $\mathcal{H}$, so it also preserves components of $C_{2n}\setminus\mathcal{H}$.  Thus $\psi: X\to X$ preserves surgery slopes, completing the proof. 
\end{proof}


\section{Symmetries of Non-overlapping Nested Partners of $C_{4n}$}
\label{sec:SymCP}

We would now like to analyze the structure of  $Sym(\Sth, L)$ for certain $L \in NNP(C_{4n})$.  The following definition will help with this analysis. 

\begin{defn}
Let $K$ and $K'$ be two distinct components of a link $L \subset \Sth$. We let $lk(K, K')$ denote the linking number of $K$ and $K'$, as defined in \cite{Ro1976}.  The \textbf{linking graph} for $L$, denoted $\Gamma(L)$, is the graph where each vertex corresponds with a component of $L$ and two vertices, corresponding to components $K, K' \subset L$, are connected by an edge exactly when $lk(K, K') \neq 0$. 
\end{defn}

Zevenbergen used this linking graph, and some additional linking criteria, to bound the number of distinct  links that are nested partners to $C_{2n}$ in \cite{Ze2021}. Here, we will relate symmetries of $(\Sth, L)$ to automorphisms of $\Gamma(L)$ via the induced  homomorphism $\phi: Sym(\Sth, L) \rightarrow Aut(\Gamma(L))$. Any homeomorphism $\rho$ of the pair $(\Sth, L)$ permutes the components of $L$ and preserves non-trivial linking number, i.e., if $K$ and $K'$ are components of $L$, then  $lk(K, K') \neq 0$ if and only if $lk(\rho(K), \rho(K')) \neq 0$. So, a homeomorphism of $(\Sth, L)$ induces an automorphism of $\Gamma(L)$. This map $\phi$ is well-defined 
 since linking number is invariant under isotopic homeomorphisms of $(\Sth, L)$. Thus, the action of $[\rho]$ on $L$ induces an automorphism $\phi([\rho])$ on $\Gamma(L)$.

We immediately see that the linking graph $\Gamma(C_{4n})$ is a single cycle with $4n$ vertices, each corresponding to a $K_i$ and which we label $v_i$. In this case, $Aut(\Gamma(C_{4n}))\cong D_{4n}$ and the homomorphism $\phi: Sym(\Sth, C_{4n}) \rightarrow Aut(\Gamma(C_{4n}))$ is onto since $\psi$ maps the subgroup $<\alpha,\gamma>$ isomorphically to $Aut(\Gamma(C_{4n}))$. To determine the effect of an $ml$-swap of $C_{4n}$ on its corresponding linking graph, compare (a) and (e) in Figure \ref{fig:DehnTwists}. 
From this diagrammatic description, we can construct $\Gamma(s_{i}(C_{4n}))$  from $\Gamma(C_{4n})$ by first switching labels between $v_{i+1}$ and $v_{i}$, and then adding a single edge connecting $v_{i-1}$ to $v_{i+2}$. When we perform a sequence of non-overlapping $ml$-swaps, this procedure generalizes, giving the following lemma. See Figure \ref{fig:NoSymEx} for an example of $L_{24} \in NNP(C_{24})$, where $L_{24}$ is constructed by performing $s_2 \circ s_8 \circ s_{16}$ on $C_{24}$. 

\begin{lemma}
\label{lem:SwapActionLG}
Let $n \geq 2$. For $L \in NPP(C_{4n})$, assume that $s_{i_{1}} \circ \ldots \circ s_{i_{m}}$ is the composition of non-overlapping $ml$-swaps performed on $(\Sth, C_{4n})$ to produce $(\Sth, L)$. Then the linking graph $\Gamma(L)$ is obtained from $\Gamma(C_{4n})$ by switching labels between $v_{i_{j}+1}$ and $v_{i_{j}}$ and inserting an edge connecting $v_{i_{j}-1}$ to $v_{i_{j}+2}$,  for each $j = 1, \ldots, m$, (mod $4n$).
\end{lemma}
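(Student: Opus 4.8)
The plan is to reduce the statement about $\Gamma(L)$ to the single $ml$-swap case already described diagrammatically, using the fact that non-overlapping $ml$-swaps act in pairwise disjoint neighborhoods and commute. Concretely, since the swaps $s_{i_1},\dots,s_{i_m}$ are non-overlapping (so $|i_j-i_k|\ge 4 \pmod{4n}$), the homeomorphism $s_{i_j}$ is supported in a neighborhood $U_j$ of $K_{i_j-1}\cup K_{i_j}\cup K_{i_j+1}\cup K_{i_j+2}$, and these $U_j$ can be chosen pairwise disjoint. Hence the effect of the composition on the link diagram, and in particular on each linking number $lk(K_a,K_b)$, is the ``superposition'' of the local effects: $lk$ of a pair of components both lying outside all the $U_j$ is unchanged, and $lk$ of a pair meeting some $U_j$ is changed exactly as in the single-swap analysis (compare Figure \ref{fig:DehnTwists}(a) and (e)). The one-swap case is precisely what the paragraph preceding the lemma establishes: $s_i$ switches the labels on $v_{i+1}$ and $v_i$ and inserts the edge $v_{i-1}v_{i+2}$, with all other linking numbers of $C_{4n}$ preserved.

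The steps, in order, would be: (1) Fix disjoint supporting neighborhoods $U_1,\dots,U_m$ for the $m$ swaps, as guaranteed by the non-overlapping hypothesis and the discussion after Definition \ref{defn:mlSwap}. (2) Observe that for a pair of components $K_a,K_b$ of $C_{4n}$ whose union is disjoint from $U_j$, the restriction $s_{i_j}|_{\text{outside }U_j}$ is the identity, so $s_{i_j}$ does not change $lk(K_a,K_b)$; combining over $j$, a pair of components disjoint from every $U_j$ keeps its original linking number. (3) For a pair meeting exactly one $U_j$, the change in linking number is governed solely by $s_{i_j}$, and the single-swap computation (already carried out diagrammatically before the lemma, or equivalently read off from Figure \ref{fig:DehnTwists}) shows the new linking numbers are exactly those recorded by relabeling $v_{i_j}\leftrightarrow v_{i_j+1}$ and adding the edge $v_{i_j-1}v_{i_j+2}$. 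Here I would note that no pair of components meets two distinct $U_j$'s: a component $K_a$ can lie in $U_j$ only if $a\in\{i_j-1,i_j,i_j+1,i_j+2\}$, and the gap condition $|i_j-i_k|\ge 4$ prevents any index from being within distance $1$ of two different $i_j$'s, so the local modifications are ``independent.'' (4) Assemble: $\Gamma(L)$ has vertex set $\{v_1,\dots,v_{4n}\}$, edge set consisting of the original $4n$-cycle edges of $\Gamma(C_{4n})$ (with the labels $v_{i_j},v_{i_j+1}$ transposed for each $j$) together with the $m$ new edges $v_{i_j-1}v_{i_j+2}$, which is exactly the claimed description.

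The main obstacle — really the only subtle point — is step (3) combined with the independence claim in step (4): I must be sure that the local change near $U_j$ is not disturbed by an adjacent swap, i.e., that the four components $K_{i_j-1},\dots,K_{i_j+2}$ relevant to swap $s_{i_j}$ are not themselves among the four components relevant to another swap $s_{i_k}$. This is where the precise non-overlapping threshold ``$|i-j|\ge 4$'' (rather than $\ge 3$) does its work: the index windows $\{i_j-1,\dots,i_j+2\}$ and $\{i_k-1,\dots,i_k+2\}$ are disjoint mod $4n$ exactly when $|i_j-i_k|\ge 4$, so each component is moved by at most one swap and the relabelings $v_{i_j}\leftrightarrow v_{i_j+1}$ commute and act on disjoint pairs. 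Granting this, the proof is a bookkeeping argument layering the single-swap case over disjoint index windows; the one-swap base case is imported verbatim from the discussion preceding the lemma. It would also be worth remarking explicitly that because the relabelings act on disjoint vertex pairs, the order in which the swaps are composed does not affect $\Gamma(L)$, consistent with the fact that non-overlapping $ml$-swaps commute.
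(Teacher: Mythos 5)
Your proposal is correct and follows essentially the same route as the paper, which justifies the lemma by the single-swap computation (comparing Figure \ref{fig:DehnTwists}(a) and (e)) and then observes that non-overlapping swaps act in pairwise disjoint neighborhoods so the local modifications superpose. Your explicit check that the index windows $\{i_j-1,\dots,i_j+2\}$ are pairwise disjoint precisely under the $|i_j-i_k|\ge 4$ condition makes the "this procedure generalizes" step of the paper fully rigorous, but introduces no new idea.
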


Given $L \in CP(C_{4n})$, we will use $\phi: Sym(\Sth, L) \rightarrow Aut(\Gamma(L))$ to place restrictions on $Sym(\Sth, L)$.

\begin{lemma}
\label{lem:kernelbound}
Let $n \geq 2$. If $L \in CP(C_{4n})$, then $ker(\phi) = Sym_{f}(\Sth, L)$ is  isomorphic to a subgroup of $\ZZ_2 \times \ZZ_2$. If $L \in NNP(C_{4n})$, then $Sym_{f}(\Sth, L) \cong \ZZ_2 \times \ZZ_2$. 
\end{lemma}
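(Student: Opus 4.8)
The plan is to identify $\ker(\phi)$ directly, compare $Sym_f(\Sth,L)$ with $Sym_f(\Sth\setminus C_{4n})$ using the homeomorphism of complements guaranteed by $L\in CP(C_{4n})$, and then, in the $NNP$ case, produce enough of $Sym_f(\Sth\setminus C_{4n})$ inside $Sym(\Sth,L)$ via Corollary~\ref{cor:MLSwapSymm}.

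That $\ker(\phi)=Sym_f(\Sth,L)$ is immediate: the vertices of $\Gamma(L)$ are the components of $L$, so $[\rho]\in\ker(\phi)$ exactly when the induced graph automorphism fixes every vertex, i.e.\ when $\rho$ fixes each component of $L$ setwise. For the bound, fix a homeomorphism $\Psi\colon\Sth\setminus L\to\Sth\setminus C_{4n}$. Conjugation by $\Psi$ is an isomorphism $Sym(\Sth\setminus L)\xrightarrow{\sim}Sym(\Sth\setminus C_{4n})$, and since $\Psi$ carries cusps bijectively to cusps it restricts to an isomorphism $Sym_f(\Sth\setminus L)\xrightarrow{\sim}Sym_f(\Sth\setminus C_{4n})$, which equals $\langle\beta,r\rangle\cong\ZZ_2\times\ZZ_2$ by Corollary~\ref{cor:fixedsyms}. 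As any symmetry of the pair $(\Sth,L)$ fixing every component setwise induces a symmetry of $\Sth\setminus L$ fixing every cusp setwise, $Sym_f(\Sth,L)$ is a subgroup of $Sym_f(\Sth\setminus L)\cong\ZZ_2\times\ZZ_2$. This gives the first assertion and reduces the second to showing $|Sym_f(\Sth,L)|\ge 3$ when $L\in NNP(C_{4n})$.

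So suppose $L$ is obtained from $C_{4n}$ by non-overlapping $ml$-swaps $s_{i_1}\circ\cdots\circ s_{i_m}$, with associated Hopf sublinks $\mathcal H=\{\mathcal H_{i_1},\dots,\mathcal H_{i_m}\}$. By Corollary~\ref{cor:fixedsyms}, $\beta$ and $r$ fix every component of $C_{4n}$ setwise, so each fixes every $\mathcal H_{i_k}$ setwise and hence preserves $\mathcal H$ setwise; Corollary~\ref{cor:MLSwapSymm} then supplies symmetries $\tilde\beta,\tilde r\in Sym(\Sth,L)$ restricting on $\Sth\setminus L$ to the conjugates $\Psi^{-1}\beta\Psi$ and $\Psi^{-1}r\Psi$. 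Since $\beta$ and $r$ fix setwise every neighborhood torus $T_i$ of the components of $C_{4n}$, the surgery description of Lemma~\ref{lem:MLSwap} shows $\tilde\beta$ and $\tilde r$ fix setwise every surgery solid torus, hence its core, so $\tilde\beta,\tilde r\in Sym_f(\Sth,L)$. Because conjugation by $\Psi$ is injective and $1,[\beta],[r]$ are pairwise distinct in $Sym(\Sth\setminus C_{4n})$, the classes $1,[\tilde\beta],[\tilde r]$ are pairwise distinct in $Sym(\Sth\setminus L)$, and therefore in $Sym(\Sth,L)$; thus $|Sym_f(\Sth,L)|\ge 3$, and a subgroup of $\ZZ_2\times\ZZ_2$ of order at least $3$ is all of it.

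The routine parts are the identification of $\ker(\phi)$ and the cusp bookkeeping in the second step; the one delicate point is the $NNP$ step, namely checking that the extensions provided by Corollary~\ref{cor:MLSwapSymm} restrict on the complement to the conjugates of $\beta$ and $r$ (so that their nontriviality and distinctness are inherited from $\beta$ and $r$) and are component-preserving. Both of these come down to $\beta$ and $r$ lying in $Sym_f(\Sth\setminus C_{4n})$, hence fixing every component and every neighborhood torus of $C_{4n}$, so that the extensions across the surgery solid tori of Lemma~\ref{lem:MLSwap} send each core to itself.
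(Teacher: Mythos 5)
Your proof is correct, and its first two steps coincide with the paper's: the identification $\ker(\phi)=Sym_f(\Sth,L)$ via simplicity of $\Gamma(L)$, and the bound $Sym_f(\Sth,L)\le Sym_f(\Sth\setminus L)\cong Sym_f(\Sth\setminus C_{4n})\cong\ZZ_2\times\ZZ_2$ from Corollary~\ref{cor:fixedsyms}, are exactly what the paper does. Where you diverge is the $NNP$ step. The paper simply observes from the explicit diagram (Figure~\ref{fig:DehnTwists}$(e)$) that the image link still admits the reflection in the projection plane and the $180^{\circ}$ rotation about the circular axis as two distinct order-two component-preserving symmetries; this is quick but leaves the reader to verify the symmetry of the resulting diagram by inspection. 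You instead derive the same two symmetries formally: since $\beta$ and $r$ fix every component of $C_{4n}$, they preserve the Hopf sublinks $\mathcal H$ setwise, so Corollary~\ref{cor:MLSwapSymm} extends them to $(\Sth,L)$, and the surgery description of Lemma~\ref{lem:MLSwap} shows the extensions fix each surgery solid torus and hence each core, placing them in $Sym_f(\Sth,L)$; distinctness is inherited from distinctness in $Sym(\Sth\setminus C_{4n})$ because the restrictions to the complement are conjugates of $\beta$ and $r$. Your route costs a little more machinery but buys rigor (no appeal to ``visually obvious'' symmetries of the new diagram) and a clean finish: a subgroup of $\ZZ_2\times\ZZ_2$ with at least three elements is the whole group, so you never need to check the orders of the extensions. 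Both arguments are sound; yours is arguably the more robust one, since it would survive even if the post-swap diagram were harder to draw symmetrically.
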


\begin{proof}
 Let $L \in CP(C_{4n})$.  Then $Sym_{f}(\Sth, L) \subseteq ker(\phi)$ since any such $\rho \in Sym_{f}(\Sth, L)$ induces an automorphism of $\Gamma(L)$ that fixes the vertices pointwise, and since $\Gamma(L)$ is a simple graph by construction, this implies that the induced automorphism is trivial. The other inclusion immediately follows, and so, $ker(\phi) = Sym_{f}(\Sth, L)$.

 Since any symmetry of a link that fixes every component induces  a symmetry of the complement that fixes every cusp, we have that $Sym _{f}(\Sth, L)$ is a subgroup of $Sym_{f}(\Sth \setminus L)$. Then $ Sym_{f}(\Sth, L) \leq Sym_{f}(\Sth \setminus L) \cong Sym_{f}(\Sth \setminus C_{4n}) \cong \ZZ_2 \times \ZZ_2$, by Corollary \ref{cor:fixedsyms}. 
 
 If $L \in NNP(C_{4n})$, then reflection in the projection plane and the rotation of $180^{\circ}$ about the circular axis going through this chain (similar to $\beta$ in Figure \ref{fig:SymDiag}) each are distinct order two elements in $Sym_{f}(\Sth, L)$. This implies that $Sym_{f}(\Sth, L) \cong \ZZ_2 \times \ZZ_2$.
\end{proof}

We can now show that every minimally twisted $4n$-chain link admits a non-overlapping nested partner whose only symmetries are those that map every link component to itself.

\begin{figure}
\[
\begin{array}{ccc}
 \includegraphics[width=1.8in]{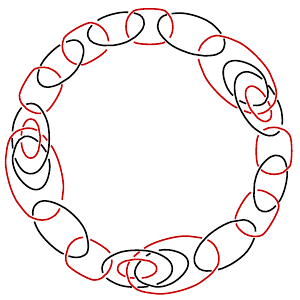} & \includegraphics[width=1.8in]{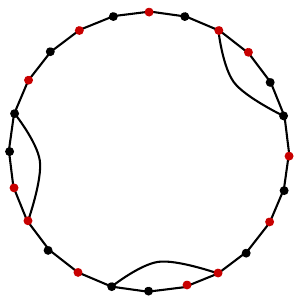}\\
 (a)\textrm{ The link }L_{24} & (b)\textrm{ Linking graph }\Gamma(L_{24})
\end{array}
\]
\caption{A non-overlapping nested partner of $C_{24}$ and its corresponding linking graph.}
\label{fig:NoSymEx}
\end{figure}

\begin{prop}
\label{prop:symlimitations}
For every $n \ge 6$, there exists $L_{4n} \in NNP(C_{4n})$ such that $$Sym(\Sth, L_{4n}) = Sym_{f}(\Sth, L_{4n}) \cong \ZZ_2 \times \ZZ_2.$$ 
\end{prop}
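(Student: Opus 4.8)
The plan is to exploit the homomorphism $\phi\colon Sym(\Sth,L)\to Aut(\Gamma(L))$ together with Lemma~\ref{lem:kernelbound}. For any $L\in NNP(C_{4n})$ that lemma gives $\ker\phi = Sym_{f}(\Sth,L)\cong\ZZ_2\times\ZZ_2$, so $Sym(\Sth,L)/Sym_{f}(\Sth,L)$ embeds into $Aut(\Gamma(L))$. Consequently it suffices to produce a single $L_{4n}\in NNP(C_{4n})$ whose linking graph $\Gamma(L_{4n})$ has \emph{no} nontrivial automorphism at all: then $\phi$ is forced to be trivial, and $Sym(\Sth,L_{4n}) = \ker\phi = Sym_{f}(\Sth,L_{4n})\cong\ZZ_2\times\ZZ_2$. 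So the whole problem reduces to building a non-overlapping nested partner with an asymmetric linking graph.

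To that end I would generalize the example $L_{24}$ of Figure~\ref{fig:NoSymEx}: for $n\ge 6$ let $L_{4n}$ be the result of performing the three $ml$-swaps $s_{2}$, $s_{8}$, $s_{16}$ on $C_{4n}$. These are pairwise non-overlapping, since the cyclic distances among $2,8,16$ are $6$, $8$, and $4n-14$, all at least $4$. By Lemma~\ref{lem:SwapActionLG} (and ignoring the relabelling of vertices, which is irrelevant to the \emph{abstract} graph $\Gamma(L_{4n})$), the graph $\Gamma(L_{4n})$ is the $4n$-cycle on $v_1,\dots,v_{4n}$ with three ``short chords'' added, namely $v_1v_4$, $v_7v_{10}$, $v_{15}v_{18}$, each joining two vertices at cycle-distance $3$. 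These chords split the $4n$-cycle into three arcs, and a routine count shows that the arcs between consecutive chords contain $3$, $5$, and $4n-17$ cycle-edges. The hypothesis $n\ge 6$ is used exactly here: it makes $3$, $5$, $4n-17$ pairwise distinct with $4n-17>5$ (for $n=5$ one would get $4n-17=3$).

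It then remains to verify $Aut(\Gamma(L_{4n}))=1$. First, because the three swap indices are spread more than $4$ apart, the only $4$-cycles of $\Gamma(L_{4n})$ are the three ``swap $4$-cycles'' $v_{i-1}v_iv_{i+1}v_{i+2}$: the chords are vertex-disjoint, so no $4$-cycle uses two of them, and the ambient $4n$-cycle has length $4n\ge 24$. In each swap $4$-cycle the chord is the unique edge both of whose endpoints have degree $3$, and — again using the spreading — every edge of $\Gamma(L_{4n})$ joining two degree-$3$ vertices is one of the three chords. Hence any automorphism of $\Gamma(L_{4n})$ permutes the three chords, so it also preserves the complementary edge set, which is precisely the spanning $4n$-cycle. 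Therefore $Aut(\Gamma(L_{4n}))$ sits inside the automorphism group $D_{4n}$ of that $4n$-cycle and is the stabilizer of the chord set; such a symmetry induces a symmetry of the cyclic word $(3,5,4n-17)$ of arc-lengths. Distinct entries rule out any nontrivial rotation, and the reversed word $(4n-17,5,3)$ is not a cyclic rotation of $(3,5,4n-17)$ (since $4n-17\notin\{3,5\}$), which rules out reflections. Thus $Aut(\Gamma(L_{4n}))=1$, $\phi$ is trivial, and the proposition follows.

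The step I expect to be the main obstacle is the intrinsic recognition of the chords inside the abstract graph $\Gamma(L_{4n})$ — i.e.\ excluding ``exotic'' automorphisms that do not respect the cycle-plus-chords structure. The girth and degree bookkeeping above handles this, but it is genuinely sensitive to how the swaps are placed: two swap indices at cyclic distance exactly $4$ would create a spurious degree-$3$--degree-$3$ edge, and chords placed too close could create extra short cycles. Choosing the gaps to be $6$, $8$, and $4n-14\ (\ge 10)$ removes every such degeneracy, so the only residual constraint is that the three arc-lengths be distinct, and that is what pins down $n\ge 6$.
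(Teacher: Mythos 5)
Your proposal is correct and follows essentially the same route as the paper: the same composition $s_{2}\circ s_{8}\circ s_{16}$, the same identification of $\Gamma(L_{4n})$ as a $4n$-cycle with three chords at cycle-distance $3$, the same use of Lemma~\ref{lem:kernelbound} to identify $\ker\phi$ with $Sym_{f}(\Sth,L_{4n})\cong\ZZ_2\times\ZZ_2$, and the same observation that the chords are the only edges joining degree-three vertices so that the distinct arc-lengths $3$, $5$, $4n-17$ kill every automorphism. Your reduction to the dihedral action on the cyclic word of arc-lengths is a slightly tidier packaging of the paper's path-length argument, but it is not a different proof.
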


\begin{proof}
Let $n \geq 6$ and construct $L_{4n}$ by performing $s_{2} \circ s_{8} \circ s_{16}$ on $C_{4n}$. By definition, $L_{4n} \in NNP(C_{4n})$. By Lemma \ref{lem:SwapActionLG}, we see that $\Gamma(L_{4n})$ can be constructed by starting with a single $4n$-cycle with vertices labeled $v_1, \ldots, v_{4n}$ following a clockwise ordering and adding exactly three edges: $e_{1}$ connecting $v_1$ to $v_4$, $e_{2}$ connecting $v_7$ to $v_{10}$, and $e_{3}$ connecting $v_{15}$ to $v_{18}$. See Figure \ref{fig:NoSymEx} for a diagram of $L_{24}$ and $\Gamma(L_{24})$. For this proof, we ignore the relabeling introduced between certain vertices of $\Gamma(C_{4n})$ under $ml$-swaps since this does not affect our argument.

We claim that $Aut(\Gamma(L_{4n}))$ is trivial. To see this we consider degree-three vertices, and paths of degree-two vertices between them.  Any automorphism of $\Gamma(L_{4n})$ must permute the degree-three vertices. Thus, the edges $e_1$, $e_2$, and $e_3$ must also be permuted by an automorphism of $\Gamma(L_{4n})$ since they are the only edges connecting a pair of degree-three vertices. This implies that any such automorphism maps an adjacent pair (a pair of vertices connected by $e_i$, for $i=1,2,3$) to an adjacent pair.  Since the paths between adjacent pairs of degree-three vertices are different lengths, any automorphism of $\Gamma(L_{4n})$ preserves adjacent pairs of degree-three vertices.  Further, adjacent degree-three vertices have paths of different lengths emanating from them, and so, each such pair cannot be swapped in a graph automorphism.  Thus degree-three vertices are point-wise fixed, which implies the remaining vertices are fixed as well.  Hence the automorphism group of $\Gamma(L_{4n})$ is trivial. 

Since $\phi(Sym(\Sth, L_{4n})) \subset Aut(\Gamma(L_{4n})) \cong \{ 1\}$, we know that $Sym(\Sth, L_{4n}) / \ker(\phi)$ is the trivial group. Thus, $Sym(\Sth, L_{4n}) = \ker(\phi) = Sym_{f}(\Sth, L_{4n})$. By Lemma \ref{lem:kernelbound}, we then have that $Sym(\Sth, L_{4n}) \cong \ZZ_2 \times \ZZ_2$. 
\end{proof}

The following corollary shows that our non-overlapping nested partners $\{ L_{4n} \}_{n=6}^{\infty}$ all have  isomorphic symmetry groups, while the symmetry groups of their complements grow linearly with $n$. This gives an explicit construction of links whose symmetry groups are arbitrarily small as subgroups of the symmetry groups of their corresponding complements. A slightly less general version of this corollary is stated in the introduction as Theorem \ref{thm:MAIN}.

\begin{cor}
\label{thm:Symmetrygroupdisparity}
For every $n \geq 6$, let $L_{4n} \in NNP(C_{4n})$ be the link constructed in Proposition \ref{prop:symlimitations}. Then $Sym(\Sth \setminus L_{4n}) \cong (D_{4n} \times \ZZ_2) \rtimes \ZZ_2$ and $Sym(\Sth, L_{4n}) \cong \ZZ_2 \times \ZZ_2$. In particular, $[Sym(\mathbb{S}^{3} \setminus L_{4n}) : Sym(\mathbb{S}^{3}, L_{4n})] = 8n \rightarrow \infty$ as $n \rightarrow \infty$.
\end{cor}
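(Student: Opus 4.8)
The plan is to assemble Corollary \ref{thm:Symmetrygroupdisparity} from the pieces already in place. Fix $n \geq 6$ and let $L_{4n} \in NNP(C_{4n})$ be the specific link produced in Proposition \ref{prop:symlimitations} by performing $s_2 \circ s_8 \circ s_{16}$ on $C_{4n}$. The statement has two halves: identifying $Sym(\Sth \setminus L_{4n})$ and identifying $Sym(\Sth, L_{4n})$; the index computation is then immediate arithmetic.

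First I would handle $Sym(\Sth, L_{4n})$, which is the easy half: Proposition \ref{prop:symlimitations} gives directly that $Sym(\Sth, L_{4n}) = Sym_f(\Sth, L_{4n}) \cong \ZZ_2 \times \ZZ_2$, so there is nothing further to do. Next, for the complement, the key observation is that $L_{4n}$ is a non-overlapping nested partner of $C_{4n}$, which by definition means $\Sth \setminus L_{4n}$ is homeomorphic to $\Sth \setminus C_{4n}$ (each $ml$-swap is a homeomorphism of complements). Since $Sym(\Sth \setminus L)$ depends only on the homeomorphism type of the complement, we get $Sym(\Sth \setminus L_{4n}) \cong Sym(\Sth \setminus C_{4n})$. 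Then Theorem \ref{thm:SymComp} says $Sym(\Sth \setminus C_{4n}) = Sym(\Sth, C_{4n})$, and Proposition \ref{prop:symgroupdecomp} identifies this group as $(D_{4n} \times \ZZ_2) \rtimes \ZZ_2$. Chaining these isomorphisms gives $Sym(\Sth \setminus L_{4n}) \cong (D_{4n} \times \ZZ_2) \rtimes \ZZ_2$, which has order $32n$.

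Finally, the index: $[Sym(\Sth \setminus L_{4n}) : Sym(\Sth, L_{4n})]$. Here I should be careful that $Sym(\Sth, L_{4n})$ really does sit inside $Sym(\Sth \setminus L_{4n})$ as a subgroup in a way compatible with the isomorphisms above — this follows from the general fact noted in the introduction that any self-homeomorphism of the pair $(\Sth, L_{4n})$ restricts to a self-homeomorphism of $\Sth \setminus L_{4n}$, giving an injection $Sym(\Sth, L_{4n}) \hookrightarrow Sym(\Sth \setminus L_{4n})$. With $|Sym(\Sth \setminus L_{4n})| = 32n$ and $|Sym(\Sth, L_{4n})| = 4$, the index is $32n / 4 = 8n$, which tends to infinity as $n \to \infty$.

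The only genuine subtlety — and the step I would flag as the one to state carefully rather than the one that is technically hard — is making sure the subgroup inclusion $Sym(\Sth, L_{4n}) \subset Sym(\Sth \setminus L_{4n})$ is being used consistently: the index is computed inside $Sym(\Sth \setminus L_{4n})$, and one needs to know that $|Sym(\Sth \setminus L_{4n})|$ is genuinely $32n$ (not just abstractly isomorphic to a group of that order, which is the same thing here) and that the image of $Sym(\Sth, L_{4n})$ under the restriction map has order exactly $4$. Both follow from what has already been established, so the proof is a short bookkeeping argument; no new geometric or combinatorial input is needed beyond Proposition \ref{prop:symlimitations}, Theorem \ref{thm:SymComp}, and Proposition \ref{prop:symgroupdecomp}.
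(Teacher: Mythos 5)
Your proposal is correct and follows essentially the same route as the paper: cite Proposition \ref{prop:symlimitations} for $Sym(\Sth, L_{4n}) \cong \ZZ_2 \times \ZZ_2$, use the homeomorphism $\Sth \setminus L_{4n} \cong \Sth \setminus C_{4n}$ together with Theorem \ref{thm:SymComp} and Proposition \ref{prop:symgroupdecomp} to identify $Sym(\Sth \setminus L_{4n})$, and divide $32n$ by $4$. Your extra remark about the inclusion $Sym(\Sth, L_{4n}) \subset Sym(\Sth \setminus L_{4n})$ being the one used for the index is a reasonable point of care but is already covered by the general observation in the introduction.
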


\begin{proof}
Since $\Sth \setminus L_{4n} \cong \Sth \setminus C_{4n}$, we have that $Sym(\Sth \setminus L_{4n}) \cong Sym(\Sth \setminus C_{4n}) \cong (D_{2n} \times \ZZ_2) \rtimes \ZZ_2$ by Proposition \ref{prop:symgroupdecomp}.  At the same time, Proposition \ref{prop:symlimitations} shows that $Sym(\Sth, L_{4n}) \cong \ZZ_2 \times \ZZ_2$. The statement on subgroup index follows from $|\ZZ_2 \times \ZZ_2| = 4$ and $|(D_{4n} \times \ZZ_2) \rtimes \ZZ_2| = 32n$.
\end{proof}

We can more broadly apply the techniques used in proving Proposition \ref{prop:symlimitations} and Corollary \ref{thm:Symmetrygroupdisparity} to determine symmetry groups of many other non-overlapping nested partners of $C_{4n}$.

In what follows, recall that $\alpha, \beta, \gamma, r$ are generators for $Sym(\Sth \setminus C_n) \cong Sym(\Sth \setminus L)$, for any $L \in CP(C_{n})$. In particular, we can consider $Sym(\Sth, L)$ as a subgroup of $<\alpha, \beta, \gamma, r >$. 

\begin{thm}
\label{thm:symcontrol}
Let $n \geq 2$. For $L \in NNP(C_{4n})$, we have that  $$Sym(\Sth, L) = Sym_{f}(\Sth, L) \rtimes H \cong (\ZZ_2 \times \ZZ_2) \rtimes  (Aut(\Gamma(L)),$$
where $H = <\alpha, \gamma> \cap Sym(\Sth, L)$. 
\end{thm}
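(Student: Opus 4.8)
The plan is to analyze the homomorphism $\phi: Sym(\Sth,L)\to Aut(\Gamma(L))$ and to realize the subgroup $H=<\alpha,\gamma>\cap Sym(\Sth,L)$ as a complement to $\ker\phi$. By Lemma~\ref{lem:kernelbound} we already know $\ker\phi=Sym_f(\Sth,L)\cong\ZZ_2\times\ZZ_2$, and this subgroup is normal, being a kernel. So the whole theorem reduces to showing that $\phi|_H: H\to Aut(\Gamma(L))$ is an isomorphism: granting that, $\phi$ itself is onto $Aut(\Gamma(L))$, we get $\ker\phi\cap H=\{1\}$ from injectivity of $\phi|_H$, and $\ker\phi\cdot H=Sym(\Sth,L)$ because for any $s\in Sym(\Sth,L)$ one can choose $h\in H$ with $\phi(h)=\phi(s)$, whence $sh^{-1}\in\ker\phi$. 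Then $Sym(\Sth,L)=\ker\phi\rtimes H\cong(\ZZ_2\times\ZZ_2)\rtimes Aut(\Gamma(L))$.

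The step I expect to be the main obstacle is showing $Aut(\Gamma(L))\le D_{4n}$, i.e. that every automorphism of $\Gamma(L)$ preserves the distinguished $4n$-cycle $v_1,\dots,v_{4n}$ and hence restricts to a symmetry of it. By Lemma~\ref{lem:SwapActionLG}, $\Gamma(L)$ is that $4n$-cycle together with $m$ chords, one for each $ml$-swap, each joining vertices at cycle-distance $3$; the non-overlapping hypothesis keeps the chords pairwise vertex-disjoint and keeps the two vertices strictly between a chord's endpoints free of chords, so every vertex of $\Gamma(L)$ has degree $2$ or $3$. I would then show the $4n$-cycle is the unique Hamiltonian cycle of $\Gamma(L)$: a Hamiltonian cycle must use both edges at each degree-$2$ vertex, hence uses all three cycle edges of every chord-gadget; were it to use the gadget's chord as well, those four vertices would already have degree two in the cycle, forcing the cycle to be just that $4$-cycle, which is impossible since $4n>4$. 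So no chord is used, the Hamiltonian cycle lies in the $4n$-cycle, and therefore equals it. The fussiest part will be carrying along the vertex relabelings the $ml$-swaps introduce, exactly as in the proof of Proposition~\ref{prop:symlimitations}, though they do not change the underlying graph.

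Once $Aut(\Gamma(L))\le D_{4n}=Aut(\Gamma(C_{4n}))$ is established, surjectivity of $\phi|_H$ follows from Corollary~\ref{cor:MLSwapSymm}. Given $\theta\in Aut(\Gamma(L))$, I would realize the corresponding cycle symmetry by an element $g\in<\alpha,\gamma>$ acting on $C_{4n}$ (possible since $<\alpha,\gamma>$ maps isomorphically onto $Aut(\Gamma(C_{4n}))$). By Lemma~\ref{lem:SwapActionLG} the chords of $\Gamma(L)$ correspond bijectively to the Hopf sublinks in the set $\mathcal{H}$ used to build $L$ (the chord recording the $ml$-swap at position $i$ corresponds to $\mathcal{H}_i=K_i\cup K_{i+1}$), and since $\theta$ permutes the chords, $g$ preserves $\mathcal{H}$ set-wise. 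Corollary~\ref{cor:MLSwapSymm} then promotes $g$ to $\tilde g\in Sym(\Sth,L)$; since $\tilde g$ acts on cusps as $g$ does, $\phi(\tilde g)=\theta$ and $\tilde g\in<\alpha,\gamma>\cap Sym(\Sth,L)=H$, so $\phi(H)=Aut(\Gamma(L))$. Finally $\phi|_H$ is injective because $H\le<\alpha,\gamma>$ and the action of $<\alpha,\gamma>$ on $\Gamma(C_{4n})$ is faithful. Combining this with the first paragraph finishes the proof.
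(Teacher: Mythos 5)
Your proposal is correct and follows essentially the same route as the paper: identify $\ker\phi = Sym_f(\Sth,L)\cong \ZZ_2\times\ZZ_2$ via Lemma~\ref{lem:kernelbound}, show $\phi|_H$ is injective by faithfulness of the $\langle\alpha,\gamma\rangle$-action, and obtain surjectivity by lifting an automorphism of $\Gamma(L)$ to one of $\Gamma(C_{4n})$ that preserves the Hopf sublinks and then invoking Corollary~\ref{cor:MLSwapSymm}. Your Hamiltonian-cycle uniqueness argument nicely makes explicit a step the paper only asserts (that the $4n$-cycle in $\Gamma(L)$ is unique, so every automorphism lies in $D_{4n}$ and permutes the chords), but this is a fleshing-out of the same idea rather than a genuinely different approach.
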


\begin{proof}
We first show that $Sym(\Sth, L)$ has the desired group structure. Since $Sym_{f}(\Sth, L) = ker(\phi)$, we know $Sym_{f}(\Sth, L)$ is a normal subgroup of $Sym(\Sth, L)$. Furthermore, Lemma \ref{lem:kernelbound} shows that $Sym_{f}(\Sth, L) = <\beta, r> \cong \mathbb{Z}_{2} \times \mathbb{Z}_{2}$. Then $H$ can act upon the normal subgroup $Sym_{f}(\Sth, L)$ via conjugation, which provides an inner semidirect product structure $Sym_{f}(\Sth, L) \rtimes H \leq Sym(\Sth, L)$. Since $Sym(\Sth \setminus L) = < \beta, r> \rtimes <\alpha, \gamma>$ and $Sym(\Sth, L) \leq Sym(\Sth \setminus L)$, we also have the reverse inclusion. Thus, $Sym(\Sth, L) = Sym_{f}(\Sth, L) \rtimes H$. 

We now justify that $H$ is isomorphic to $Aut(\Gamma(L))$. Since $H = <\alpha, \gamma> \cap Sym(\Sth, L)$ and $<\alpha, \gamma>$ acts faithfully, we see that any two distinct elements of $H$, $a_1$ and $a_2$, provide different permutations of the components of $L$. Since the vertices of $\Gamma(L)$ correspond with the components of $L$, this implies that $\phi(a_1) \neq \phi(a_2)$, and so, $\phi$ restricted to $H$ is one-to-one. 

To finish the proof, we show that $\phi$ maps $H$ onto $Aut(\Gamma(L))$. The strategy is to show that given any $f \in Aut(\Gamma(L))$, we can construct $F \in Sym(\Sth\setminus C_{2n})$ that induces $f$ and preserves the Hopf sublinks used to construct $L$. An application of Corollary \ref{cor:MLSwapSymm} then completes the argument.

Since $L \in NNP(C_{4n})$, there is a set of disjoint Hopf sublinks $\mathcal{H} = \{H_1\cup H_2\cup \cdots \cup H_m\}\subset C_{4n}$ such that $L$ is the result of a $0$-surgery on $\mathcal{H}$. Recall from Lemma \ref{lem:SwapActionLG} that $\Gamma(L)$ is obtained from the $4n$-cycle $\Gamma(C_{4n})$ by switching labels between $v_{i+1}$ and $v_{i}$ and then inserting an edge connecting $v_{i-1}$ to $v_{i+2}$,  for each $ml$-swap $s_i$ performed. Thus, the vertices of $\Gamma(C_{4n})$ corresponding to components of each $H_j \in \mathcal{H}$ are those vertices whose labels are switched in constructing the unique $4n$-cycle of $\Gamma(L)$.  Let $f \in Aut(\Gamma(L))$, and let $f^{\ast}$ be the automorphism in  $Aut(\Gamma(C_{4n}))$ that agrees with $f$ on vertices that aren't swapped.  Recall that $Aut(\Gamma(C_{4n})) \cong D_{4n}$, and the dihedral action of an element of this group on $\Gamma(C_{4n})$ is determined by its action on two consecutive vertices. Now $f$ and $f^{\ast}$ must agree on at least two consecutive vertices since $L \in NNP(C_{4n})$.  Thus, $f^{\ast}$ is unique.

More can be said about $f^{\ast}$.  Since $f \in Aut(\Gamma(L))$, it must permute the edge set $\{e_j\}_{j=1}^{m}$  added to $\Gamma(C_{4n})$ to construct $\Gamma(L)$.  Since only one side of each $e_j$ has exactly two vertices, they must correspond to components of $H_j\subset \mathcal{H}$.  Thus both $f$ and $f^{\ast}$ permute the vertices corresponding to components in the $H_i$.  

Now there is an $F\in Sym(\Sth\setminus C_{4n})$ with $\phi(F) = f^{\ast}$, since $\phi: Sym(\Sth\setminus C_{4n}) \to Aut(\Gamma(C_{4n}))$ is onto. Moreover, since $f^{\ast}$ preserves vertices corresponding to components in $\mathcal{H}$, the symmetry $F$ preserves $\mathcal{H}$ set-wise.  Corollary \ref{cor:MLSwapSymm} implies that $F$ extends to a symmetry $F'\in Sym(\Sth,L)$.  By construction, $F'$ induces $f\in Aut(\Gamma(L))$, completing the proof that $H\cong Aut(\Gamma(L))$. 
\end{proof}

\begin{remark}
Note that, our hypothesis of $L \in NNP(C_{4n})$ is necessary to have the desired group decomposition given in Theorem \ref{thm:symcontrol}. For instance, one could construct $L' \in CP(C_{4n}) \setminus NNP(C_{4n})$ by performing $\frac{1}{k_{i}}$ Dehn surgeries along all components of $C_{4n}$ with even indices to create $2n$ $(2, 2k_i)$-torus sublinks. One can chose $k_{i}$ so that the linking numbers are all different, and so, only the symmetry denoted by $\beta$ will remain, as torus links are chiral. This observation was communicated to us by a referee of an earlier draft of this paper. 
\end{remark}

We provide a corollary showing how we can use particular choices of $ml$-swaps to control the symmetry group of a non-overlapping nested partner of $C_{4n}$.  In what follows, $D_1 \cong \ZZ_2$ and $D_2 \cong \ZZ_2 \times \ZZ_2$.

\begin{cor}
\label{cor:symcontrol}
Let $n \geq 2$. If $k$ divides $4n$ and $ 1 \leq k \leq n$, then there exists $L \in NNP(C_{4n})$ such that $$Sym(\Sth, L)  \cong (\ZZ_2 \times \ZZ_2) \rtimes D_k.$$ 
\end{cor}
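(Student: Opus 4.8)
The plan is to apply Theorem \ref{thm:symcontrol}, which reduces the problem to producing a non-overlapping nested partner $L\in NNP(C_{4n})$ whose linking graph has automorphism group isomorphic to $D_k$ (interpreting $D_1\cong\ZZ_2$ and $D_2\cong\ZZ_2\times\ZZ_2$, as stipulated). Indeed, once such an $L$ is constructed, Theorem \ref{thm:symcontrol} gives $Sym(\Sth,L)\cong (\ZZ_2\times\ZZ_2)\rtimes Aut(\Gamma(L))\cong(\ZZ_2\times\ZZ_2)\rtimes D_k$ directly. So the entire content of the corollary is a combinatorial construction: find a pattern of non-overlapping $ml$-swaps on $C_{4n}$ whose effect on the $4n$-cycle $\Gamma(C_{4n})$ produces a ``decorated cycle'' with exactly a dihedral symmetry of order related to $k$.

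The construction I would use exploits the hypothesis $k\mid 4n$ and $1\le k\le n$. Partition the cyclic index set $\ZZ/4n\ZZ$ into $k$ arcs of equal length $4n/k\ge 4$, and perform a single $ml$-swap in a fixed relative position within each arc (say $s_{j\cdot(4n/k)+c}$ for $j=0,\dots,k-1$ and a fixed offset $c$). Because $4n/k\ge 4$, consecutive chosen indices differ by at least $4$, so these $ml$-swaps are non-overlapping and Lemma \ref{lem:SwapActionLG} applies: $\Gamma(L)$ is the $4n$-cycle with $k$ extra ``chord'' edges $e_j$ placed in rotationally symmetric positions. This graph visibly admits the rotation by $4n/k$ (which cyclically permutes the $k$ chords) and a reflection, giving $D_k\le Aut(\Gamma(L))$; conversely, as in the proof of Proposition \ref{prop:symlimitations}, any automorphism must permute the degree-three vertices and hence the chords, and since the gaps between consecutive chords all have the same length $4n/k$, the automorphism group is exactly $D_k$. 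For the degenerate cases $k=1$ (one chord, whose two endpoints can be swapped but the chord is the unique one, so the stabilizer of the cyclic structure that fixes that chord setwise is $\ZZ_2$, matching $D_1$) and $k=2$ (two antipodal chords, giving $\ZZ_2\times\ZZ_2$, matching $D_2$) I would verify the count directly; these are consistent with the stated conventions $D_1\cong\ZZ_2$, $D_2\cong\ZZ_2\times\ZZ_2$.

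Concretely, the steps in order are: (1) set $d=4n/k$ and define $L$ as the result of the non-overlapping $ml$-swaps $s_{c},s_{c+d},s_{c+2d},\dots,s_{c+(k-1)d}$ on $C_{4n}$ for a convenient offset $c$ (e.g. $c=2$), noting $d\ge 4$ guarantees non-overlapping; (2) invoke Lemma \ref{lem:SwapActionLG} to describe $\Gamma(L)$ as the labelled $4n$-cycle together with the $k$ chords $e_j$ connecting $v_{c+jd-1}$ to $v_{c+jd+2}$; (3) exhibit the order-$d$ rotation and a reflection of this decorated cycle, establishing $D_k\hookrightarrow Aut(\Gamma(L))$; (4) run the degree-argument of Proposition \ref{prop:symlimitations} to show $Aut(\Gamma(L))$ contains nothing more — any automorphism permutes the $k$ degree-three chords, the equal gap lengths force it to be an affine map $i\mapsto \pm i+t$ of $\ZZ/4n\ZZ$ preserving the chord pattern, and such maps form exactly $D_k$; (5) apply Theorem \ref{thm:symcontrol} to conclude $Sym(\Sth,L)\cong(\ZZ_2\times\ZZ_2)\rtimes D_k$.

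The main obstacle is step (4), the upper bound on $Aut(\Gamma(L))$: one must make sure that the equal spacing of the chords does not accidentally create ``extra'' symmetries, and one must carefully handle the label-switching that $ml$-swaps introduce near each chord (which, as in Proposition \ref{prop:symlimitations}, can be ignored for the purpose of computing $Aut(\Gamma(L))$ but should be acknowledged). A secondary subtlety is the small-$k$ bookkeeping — making the identifications $D_1\cong\ZZ_2$ and $D_2\cong\ZZ_2\times\ZZ_2$ match what the decorated-cycle automorphism group actually is — but this is routine once the general pattern is in place. I would also double-check the edge cases where $d=4$ exactly (i.e. $k=n$), since then consecutive $ml$-swap supports are as close as allowed; the non-overlapping condition $|i-j|\ge 4$ is still met, so the argument goes through unchanged.
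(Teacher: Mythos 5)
Your proposal is correct and takes essentially the same approach as the paper: the paper likewise performs the $k$ evenly spaced non-overlapping $ml$-swaps $s_{1}, s_{\frac{4n}{k}+1}, \dots, s_{(k-1)\frac{4n}{k}+1}$ (your construction with offset $c=1$), invokes Lemma \ref{lem:SwapActionLG} to see that $\Gamma(L)$ is the $4n$-cycle with $k$ evenly spaced chords, concludes $Aut(\Gamma(L)) \cong D_k$ from the equal spacing, and applies Theorem \ref{thm:symcontrol}. Your extra care with the $k=1,2$ cases and the label-switching is consistent with the paper's conventions.
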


\begin{proof}
Let $n \geq 2$ and assume $k$ divides $4n$ with $1 \leq k \leq n$. By Theorem \ref{thm:symcontrol}, we just need to show that there exists $L \in NNP(C_{4n})$ such that $Aut(\Gamma(L)) \cong D_{k}$. Given $C_{4n}$, perform the composition of $ml$-swaps $s_1 \circ s_{\frac{4n}{k} +1} \circ s_{2(\frac{4n}{k}) +1} \circ \cdots \circ s_{(k-1)\frac{4n}{k} + 1}$. Since $k \leq n$, we have that $\frac{4n}{k} \geq 4$. Thus, these $ml$-swaps are pairwise non-overlapping, and so,  $L \in NNP(C_{4n})$. By Lemma \ref{lem:SwapActionLG}, $\Gamma(L)$ is obtained by adding $k$ edges $\{e_i \}_{i=1}^{k}$ to $\Gamma(C_{4n})$ and relabeling certain vertices. By construction, each such $e_i$ connects a pair of vertices of $\Gamma(C_{4n})$ that were a distance $3$ apart in $C_{4n}$, and there is a distance of $\frac{4n}{k} -3$ from one pair of valence three vertices corresponding to the same $ml$-swap to the next closest such pair of vertices in $\Gamma(L)$, following the cyclic ordering of the vertices $C_{4n}$. Since any automorphism of $\Gamma(L)$ must map the set $\{e_i\}_{i=1}^{k}$ to itself, and these $k$ edges are evenly spaced in $\Gamma(L)$, we have that $Aut(\Gamma(L)) \cong D_k$. 
\end{proof}

We also note an interesting immediate application of Corollary \ref{cor:symcontrol} suggested by the referee:

\begin{cor}
	Let $k_1, \ldots, k_p \in \mathbb{N}_{\geq 2}$ and set $n = k_1 \cdots k_p$. Then there exists a family of links $\{L_i\}_{i=1}^{p}$ such that $Sym(\Sth, L_i)  \cong (\ZZ_2 \times \ZZ_2) \rtimes D_{k_{i}}$ and $\Sth \setminus L_i \cong \Sth \setminus C_{4n}$, for all $i=1,\ldots, p$. 
\end{cor}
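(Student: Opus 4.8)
The plan is to apply Corollary \ref{cor:symcontrol} repeatedly, once for each $k_i$, using the common fact that $\Sth \setminus L \cong \Sth \setminus C_{4n}$ for every non-overlapping nested partner of $C_{4n}$. First I would observe that since $n = k_1 \cdots k_p$, each $k_i$ divides $n$, hence divides $4n$; and $k_i \leq n$ because the remaining factors $k_1 \cdots \widehat{k_i} \cdots k_p$ are each at least $1$ (in fact, if $p \geq 2$ this product is at least $2$, and if $p=1$ then $k_i = n$, so in all cases $k_i \le n$). Thus each $k_i$ satisfies the hypotheses ``$k_i$ divides $4n$ and $1 \le k_i \le n$'' of Corollary \ref{cor:symcontrol}.

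Next, for each $i = 1, \ldots, p$, I would invoke Corollary \ref{cor:symcontrol} with the fixed value of $n$ and the divisor $k = k_i$ to produce a link $L_i \in NNP(C_{4n})$ with $Sym(\Sth, L_i) \cong (\ZZ_2 \times \ZZ_2) \rtimes D_{k_i}$. Because $L_i \in NNP(C_{4n}) \subset CP(C_{4n})$, by the definition of complement partner we have $\Sth \setminus L_i \cong \Sth \setminus C_{4n}$. Collecting these $p$ links gives the desired family $\{L_i\}_{i=1}^{p}$, and the proof is essentially a one-line bookkeeping argument once the two inequalities/divisibility conditions are checked.

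I do not anticipate a genuine obstacle here: the only thing to verify carefully is that $k_i \le n$ holds for every $i$, which is immediate from $n = k_1 \cdots k_p$ together with each $k_j \ge 2$ (so dividing $n$ by $k_i$ leaves a product of nonnegative integer factors that is at least $1$). If one wanted the family to consist of pairwise non-isotopic links, one could additionally remark that the groups $(\ZZ_2 \times \ZZ_2) \rtimes D_{k_i}$ have distinct orders $16 k_i$ whenever the $k_i$ are distinct, but the statement as phrased does not require this, so I would keep the proof to the minimal verification above.

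\begin{proof}
Since $n = k_1 \cdots k_p$ with each $k_j \in \mathbb{N}_{\geq 2}$, for each index $i$ we have that $k_i$ divides $n$, and hence $k_i$ divides $4n$. Moreover $n / k_i = \prod_{j \neq i} k_j$ is a product of positive integers, so $n/k_i \geq 1$, which gives $k_i \leq n$. Thus $k_i$ satisfies the hypotheses of Corollary \ref{cor:symcontrol}. Applying that corollary with this fixed $n$ and with $k = k_i$ produces a link $L_i \in NNP(C_{4n})$ with $Sym(\Sth, L_i) \cong (\ZZ_2 \times \ZZ_2) \rtimes D_{k_i}$. Since $L_i \in NNP(C_{4n}) \subseteq CP(C_{4n})$, we have $\Sth \setminus L_i \cong \Sth \setminus C_{4n}$ by definition of a complement partner. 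Doing this for each $i = 1, \ldots, p$ yields the desired family $\{L_i\}_{i=1}^{p}$.
\end{proof}
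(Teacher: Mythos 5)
Your proof is correct and is exactly the intended argument: the paper states this corollary as an immediate application of Corollary \ref{cor:symcontrol} and gives no separate proof, and your verification that each $k_i$ divides $4n$ and satisfies $1 \le k_i \le n$ is precisely the bookkeeping needed to invoke that corollary for each $i$.
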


One could easily construct other conditions on $ml$-swaps, and more broadly, Dehn twists along components of $C_{4n}$ to  obtain further results about symmetry groups of complement partners of $C_{4n}$. These constructions lead one to ask: which subgroups of $Sym(\Sth \setminus C_n)$ can be realized as the symmetry group of some complement partner of $C_n$? This question was addressed by Belleman \cite{Belleman2023} in an unpublished manuscript. A more general version of this question is found in Question \ref{Q1}.

\section{Concluding Remarks and Open Questions}
\label{sec:concluding}

Theorem \ref{thm:symcontrol} and Corollary \ref{cor:symcontrol} highlight the variety of symmetry groups that can occur just for nonoverlapping nested partners of minimally twisted $4n$-chain links and how one can control such symmetry groups using particular types of homeomorphisms. There are several ways these results could be expanded upon or generalized. One could examine symmetry groups of nested (not necessarily non-overlapping) partners or complement partners of $C_{2n}$. In another direction, one could consider symmetry groups of a larger class of links and their respective complement partners. As mentioned in Section \ref{sec:SymMinTwisted}, minimally twisted $4n$-chain links are a special subclass of (flat) fully augmented links, a set of links whose complements have exceptionally nice geometric structures that can be leveraged in the study of symmetries. In particular, their link complements contain many totally geodesic surfaces that can be used to restrict the behavior of any symmetry of their respective complements; see \cite{MiTr2023}. For instance, in \cite{Ag3}, Agol leveraged these tools to show that Dehn fillings of flat fully augmented chain mail links provide a class of asymmetric L-space links with arbitrary many components. Thus, the techniques used in this paper combined with the tractable geometry of fully augmented link complements have the potential to help answer several questions about symmetries of links and their complements. 

We conclude this paper with some questions in this direction.

\begin{question}
	\label{Q1}
Given a hyperbolic link $L$ and any complement partner $L'$, we know that $Sym(\Sth, L') \subseteq Sym(\Sth \setminus L)$. For a fixed $L$, can one  classify which subgroups of $Sym(\Sth \setminus L)$ are realized as symmetry groups of complement partners?
\end{question}

\begin{question}
	\label{Q2}
Recall that the symmetry group of a hyperbolic link is a finite subgroup of $O(4)$ and such subgroups have been classified. Thus, can one determine which of these subgroups are realized as $Sym(\Sth, F)$, for some fully augmented link $F$? 
\end{question}


\bibliographystyle{hamsplain}
\bibliography{biblio}

\end{document}